\numberwithin{equation}{section}
\newtheorem{theorem}{Theorem}[section]
\newtheorem{lemma}[theorem]{Lemma}
\newtheorem{proposition}[theorem]{Proposition}
\newtheorem*{AssumpA}{Assumption A$(\rho,\varepsilon)$}
\newtheorem*{AssumpA'}{Assumption A$'$ $(\rho,\varepsilon)$}
\newtheorem*{AssumpA''}{Assumption A$''$ $(\rho,\varepsilon)$}
\theoremstyle{definition}
\newtheorem{remark}[theorem]{Remark}
\theoremstyle{definition}
\theoremstyle{definition}
\def\dashint{\operatorname%
{\,\,\text{\bf--}\kern-.98em\DOTSI\intop\ilimits@\!\!}}
\def\sfx{{\sf x}}
\def\bC{\mathbb{C}}
\def\bL{\mathbb{L}}
\def\bR{\mathbb{R}}
\def\bW{\mathbb{W}}
\def\bZ{\mathbb{Z}}
\def\fH{\mathfrak{H}}
\def\cD{\mathcal{D}}
\def\cF{\mathcal{F}}
\def\cM{\mathcal{M}}
\def\cO{\mathcal{O}}
\def\cQ{\mathcal{Q}}
\def\Xint#1{\mathchoice
{\XXint\displaystyle\textstyle{#1}}%
{\XXint\textstyle\scriptstyle{#1}}%
{\XXint\scriptstyle\scriptscriptstyle{#1}}%
{\XXint\scriptscriptstyle\scriptscriptstyle{#1}}%
\!\int}
\def\XXint#1#2#3{{\setbox0=\hbox{$#1{#2#3}{\int}$ }
\vcenter{\hbox{$#2#3$ }}\kern-.6\wd0}}
\def\dashint{\Xint-}
\begin{document}

\title[Parabolic Systems]{Parabolic Systems with measurable coefficients in weighted Sobolev spaces}

\author{Doyoon Kim}
\thanks{The first author was supported by the National Research Foundation of Korea (NRF) grant funded by the Korea government (MSIT) (2019R1A2C1084683).}
\address{Doyoon Kim, Department of Mathematics, Korea University, Anam-ro 145, Sungbuk-gu, Seoul, 02841, Republic of Korea}
\email{doyoon\_kim@korea.ac.kr}

\author{Kyeong-Hun Kim}
\thanks{The second author was supported by the National Research Foundation of Korea (NRF) grant funded by the Korea government (MSIT) (2019R1A5A1028324)}
\address{Kyeong-Hun Kim, Department of Mathematics, Korea University, Anam-ro 145, Sungbuk-gu, Seoul, 02841, Republic of Korea}
\email{kyeonghun@korea.ac.kr}

\author{Kijung Lee}
\thanks{The third author was supported by the National Research Foundation of Korea (NRF) grant funded by the Korea government (MSIT) (2019R1F1A1058988)}
\address{Kijung Lee, Department of Mathematics, Ajou University, World cup-ro 206, Yeongtong-gu, Suwon, 16499, Republic of Korea}
\email{kijung@ajou.ac.kr}

\subjclass[2010]{35K51, 35R05}
\keywords{sharp/maximal functions, parabolic systems, weighted Sobolev spaces, measurable coefficients}

\begin{abstract}
We present a weighted $L_p$-theory of  parabolic systems on a half space $\bR^d_+$.  The leading coefficients are assumed to be only measurable in time $t$  and  have small bounded mean oscillations (BMO)  with respect to the spatial variables $x$, and the lower order coefficients are allowed to blow up near the boundary.
\end{abstract}

\maketitle

\section{Introduction}
\label{sec:Introduction}
In this paper we propose a weighted $L_p$-theory  for parabolic systems in the following non-divergence form:
\begin{equation}\label{eq_intro_2}
-u_t(t,x) + \sum_{i,j=1}^dA^{ij}(t,x) D_{ij} u(t,x) +\sum_{i=1}^dB^i(t,x)D_i u+C(t,x)u-\lambda u(t,x) = f(t,x)
\end{equation}
defined on $(-\infty, T)\times \bR^d_+$, where $\bR^d_+:= \{x=(x_1,x')\in \bR^d: x_1>0\}$ and $\lambda$ is a non-negative constant.  The coefficient matrices $A^{ij} = [a^{ij}_{kr}]$ for $i,j=1,\ldots,d$,  $B^{i}=[b^i_{kr}]$ for $i=1,\ldots,d$,  and  $C=[c_{kr}]$ have dimension  $d_1\times d_1$ and depend on $(t,x)$. The free term $f$ and the solution $u$ are $d_1\times 1$ matrix valued functions, that is
$$
u=
\begin{bmatrix}
u^1\\  \vdots \\ u^{d_1}\\
\end{bmatrix},
\quad
f = 
\begin{bmatrix}
f^1\\  \vdots \\ f^{d_1}\\
\end{bmatrix},
$$
where the entries can take values  in  $\bC$.

We may interpret $u(t,\cdot)$ as a family of  densities of diffusing chemical materials in a medium at time $t$. 
The system \eqref{eq_intro_2} combined with the zero boundary condition, the typical control of the densities on the boundary, yields a very subtle question on the behavior of solutions near the boundary, for instance, when the free term $f$ blows up near the boundary, since the densities are forced to decrease or increase near the boundary in a very steep way. 
Thus, the behavior of solutions near the boundary is quite different from that in the interior of the domain and can be measured via  weights that involve the distance to the boundary, which is $x_1$  in this paper.
We aim to understand  the  regularity relation between the solution $u$ and the free term $f$,  mainly focusing on the boundary behaviors of the solution.   The leading coefficients are assumed to be only measurable in time $t$  and  have small bounded mean oscillations (BMO)  with respect to the spatial variables $x$.

We consider  the system (\ref{eq_intro_2}) in the framework of weighted Sobolev spaces  
\begin{equation*}
L_p((-\infty,T); H^{\gamma}_{p,\theta}(\bR^d_+)),\label{space_framework}
\end{equation*} 
which were introduced  by  N. V. Krylov  \cite{MR1708104} with $\gamma\in \bR$. 
If $\gamma$ is a non-negative integer, we have  the  characterization
\begin{equation*}
							\label{eq0615_01}
H_{p, \theta}^{\gamma}:=H_{p, \theta}^{\gamma}(\bR^d_+)
= \{ u : x_1^{|\alpha|} D^{\alpha} u \in L_{p, \theta}(\bR^{d}_{+}) \;\;
\forall \alpha : 0 \le |\alpha| \le \gamma \},
\end{equation*}
where $L_{p, \theta}(\bR^{d}_{+})$ is the $L_p$-space with the weighted Lebesgue  measure
$\mu_d(dx) = x_1^{\theta - d} \, dx$.  Since the work of \cite{MR1708104}, there has been steady attentions to the solvability theory for equations in the weighted Sobolev spaces $H_{p,\theta}^\gamma$ setting; see \cite{MR2111792, MR2561181, MR2990037, MR3147235}. The necessity of such weighted Sobolev spaces comes from, for instance, the theory of stochastic partial differential equations (SPDEs);  see  e.g. \cite{MR1262972, MR1720129} for detailed motivations. In short, we point out that, in general,  the derivatives of solutions to SPDEs  behave badly near the boundary of domains and the $L_p$-norm of the derivatives of solutions cannot be measured without the help of appropriate weights. Interestingly, it turns out that the  weighted spaces $H_{p,\theta}^\gamma(\bR^d_+)$ and $L_p((-\infty,T); H^{\gamma}_{p,\theta}(\bR^d_+))$ are also quite useful in studying deterministic elliptic and parabolic   equations and systems if, for instance, the  free term  $f$ behaves wildly near the boundary as mentioned above,  if systems have lower order derivatives whose coefficients are unbounded near the boundary, or if  systems are defined on non-smooth domains. As an example, if the free term $f$  blows up near the boundary, then  the derivatives of solutions to systems do not belong to  $L_p$-spaces without weights and one needs appropriate weights  to measure the $L_p$-norm of the derivatives of solutions.

We remark that,  if one has a certain unique solvability theory in  weighted Sobolev space 
$L_p((-\infty,T); H^{\gamma}_{p,\theta}(\bR^d_+))$ for systems defined on the half space $\bR^d_+$, then  almost  for free one gets  the corresponding 
 theory in $L_p((-\infty,T); H^{\gamma}_{p,\theta}(\cO))$ for systems defined on $C^1$ domain $\cO\subset \bR^d$ and for any $\gamma\in \bR$.  For  details, we refer to \cite{MR2111792},  where single equations are studied on  $C^1$ domains with the crucial help of the results on a half space. In fact, the result on a half space is not just a starting point, but rather fundamental for the rest of the development of theory.  One noteworthy fact is that, regardless of the regularity parameter $\gamma \in \bR$ especially for large $\gamma$ in dealing high regularity  case, the boundary of the domain is required to be only $C^1$ and no more, which means that the (weighted) regularity of solutions can be improved along the improvement of the forcing term without imposing any further smoothness assumptions on the boundary even if $\gamma$ is large. 

Now, let us place a short description on  related work. The Laplace equation and heat equations in the weighted Sobolev spaces $H_{p,\theta}^\gamma$ setting were first considered in \cite{MR1708104}, where $\theta$ lies in the optimal range $
(d-1,d-1+p)$.
These results were extended to non-divergence type elliptic and parabolic equations with continuous coefficients in \cite{MR2111792}.
Kozlov and Nazarov  in \cite{MR2561181} treated parabolic equations with coefficients depending only on $t$ in mixed space-time norm spaces with the same type of weights.
Recently, in  \cite{MR3318165, MR3147235, MR2990037}  non-divergence and divergence type equations were treated with coefficients having small mean oscillations 
 in both the spatial and time variables. 
In particular, the coefficients in \cite{MR3318165} are further allowed to have no regularity assumptions in the time variable or in one spatial variable. We kindly call the reader's attention to the fact that all the results in \cite{MR3318165, MR3147235, MR2561181, MR2111792, MR2990037, MR1708104} treated only single equations.
Quite recently,  \cite{MR3470413} handled elliptic and parabolic systems in natural modifications of the spaces $H_{p,\theta}^\gamma(\bR^d_+)$ and $L_p((-\infty,T); H^{\gamma}_{p,\theta}(\bR^d_+))$ for matrix-valued $u$ and $f$.

The behind spirit of this paper is the same as the one of  \cite{MR1708104}. This work of \cite{MR1708104} for deterministic equations is the  important preparation for the next step of building a decent theory of stochastic equations on $C^1$-domains. We do this preparation  for stochastic systems in this paper. In fact, there is a preceding work, \cite{MR3470413} with the same purpose. Although it is quite elaborated, we have felt it unsatisfactory.

In this paper we extend the results in \cite{MR3470413}  to a considerably more general setting. 
Compared to the  results in \cite{MR3470413}, the main features of our results can be summarized as follows:
\begin{itemize}
\item Extension on the range of admissible weights:  the condition $\theta\in (d-1,d+1)$ if $p\geq 2$ and $\theta\in (d+1-p,d+p-1)$ if $1<p\leq 2$  in \cite{MR3470413} is extended to the full range $\theta\in (d-1,d-1+p)$.

\item The additional artificial assumption $A^{1j}\equiv 0$ for $j=2,\cdots, d_1$ in  \cite{MR3470413} is dropped in this paper.

\item While $A^{ij}=A^{ij}(t)$s are assumed to depend only on $t$  in \cite{MR3470413}, in this paper $A^{ij}$s depend on $(t,x)$ and they are merely measurable in $t$ and they have small BMO in $x$. 
\end{itemize}
The main reason why, in this paper, we can drop such extra conditions assumed in \cite{MR3470413}  is that we use somewhat  different approaches that we now explain.
The overall procedure, a standard one in $L_p$-theory,  to obtain the main results is  obtaining  a priori estimates and then using the method of continuity.
While in  \cite{MR3470413}  the above extra conditions were needed for attempting the estimation of the sharp functions of the \emph{second derivatives} of solutions, in this article we only estimate the sharp functions of the\emph{ first derivatives}, and then we estimate the weighted $L_p$-norms of solutions and their  second derivatives  from the obtained estimates of the first derivatives and the help of $L_p$-estimates without weights for systems like \eqref{eq_intro_2} through a partition of unity argument.  
Another important remark is that unlike in  \cite{MR3470413} we now use  the Fefferman-Stein (WFS) theorem and the Hardy-Littlewood (WHL) maximal function theorem with appropriate Muckenhoupt weights ($A_p$ weights below) \emph{for the first derivatives} of solutions. Doing so, we can drop those restrictive conditions imposed on the leading coefficient matrices  in \cite{MR3470413} and keep the full range $ (d-1,d-1+p)$ for $\theta$. For the record, in our setting the aforementioned WFS theorem and WHL maximal function theorem with appropriately designed $A_p$ weights, which make us keep the full range of $\theta$, are only  efficient for the first derivatives of solution. The same strategy does not work with  the second derivatives of solutions. In this sense, paying attention to the first derivatives is the optimal strategy for our problem of systems, meaning that we take off most of artificial conditions.

The strategy of estimating the first derivatives is also used in \cite{MR3318165} for single equations. The key step in \cite{MR3318165} and this paper is the estimates of mean oscillations of solutions (see Lemmas \ref{lem0106_1} and \ref{lem0201_1}) in common. While \cite{MR3318165} used a weighted version of mean oscillation estimates, in this paper, thanks to WFS theorem and WHL maximal function theorem for appropriate $A_p$ weights, we only use unweighted mean oscillation estimates of first derivatives, which make us fully use the results for the usual Sobolev spaces without weights.

In addition to the fact that parabolic {\em systems} are considered in weighted Sobolev spaces in this paper, one of main features of our assumptions on the coefficient matrices is that, as we mentioned,  the leading coefficients $A^{ij}(t,x)$ are allowed to have no regularity assumptions other than measurability as functions of $t$ for each fixed $x$ and have small and bounded, not necessarily vanishing on the boundary,  mean oscillations in $x$.
The class of vanishing mean oscillation coefficients ($\text{VMO}_x$ coefficients) was first considered in \cite{MR2304157} for both divergence and non-divergence type equations (not systems) in the usual Sobolev spaces without weights.
Then, there has been considerable progress in efforts to reduce the regularity assumptions on the leading coefficients so that they are allowed to be merely measurable in one spatial direction (in the elliptic case) and in one spatial direction and the time variable (in the parabolic case). A few of good references about this direction of study are \cite{MR2338417, MR2300337, MR2601069, MR2680179, MR2764911}; also, see a survey paper \cite{MR4156495} and references therein.
However, when systems (not equations) in non-divergence form are considered, it is not clear if one can get less regular leading coefficient matrices than those considered in this paper (i.e  coefficient matrices which are measurable in $t$, small $\text{BMO}$ in $x$) even in the usual weighted Sobolev spaces (see \cite{MR2338417}).

Closing the introduction of this paper, we pose a (very) brief history of weighted Sobolev spaces in connection with equations/systems as a landscape in which our paper can be settled. Weighted Sobolev spaces are extensively considered to study elliptic and parabolic equations/systems in a variety of references, where weights are introduced by various reasons.
For example, weights are used to deal with degenerate equations, the singularity of the boundary, and blow-up or very oscillatory coefficients.
See \cite{MR802206}, where motivations of having weights are explained along with various types of weighted Sobolev spaces and their properties, as well as elliptic equations are solved in weighted Sobolev spaces.
In \cite{MR775683} weighted Sobolev spaces are used for domains with wedges.
Parabolic equations are solved in parabolic weighted Sobolev spaces in \cite{MR4072650}.
It is also observed that estimates of solutions in Sobolev spaces with  $A_p$ weights result in the unique solvability of equations in a wider class of function spaces (e.g. estimates in un-mixed normed spaces with weights give the unique solvability in $L_{p,q}$ spaces) due to the extrapolation theorem of Rubio de Francia \cite{MR745140}. 
Also see \cite{MR3812104} for $L_p$-estimates with $A_p$ weights.

Throughout the paper, we impose the Legendre-Hadamard ellipticity condition on the leading coefficients, i.e., there exists a constant $\delta>0$ such that 
\begin{equation}
                                \label{eq_intro_1}
\Re\left(\sum_{i,j=1}^d
\eta^{\text{tr}} \xi_i \xi_j A^{ij}(t,x)\bar\eta\right)
\ge \delta |\xi|^2|\eta|^2
\end{equation}
holds for all $(t,x) \in \bR\times\bR^{d}_+$, $\xi=(\xi_1,\ldots,\xi_d) \in \bR^d$, and 
$\eta=\begin{bmatrix}
\eta^1\\  \vdots \\ \eta^{d_1}\\
\end{bmatrix}$  with $\eta^k\in \bC$, $k=1,\ldots,d_1$, where  $\eta^{\text{tr}}$ denotes the transpose of $\eta$ and
$\Re(f)$ indicates the real part of $f$.  

Also, all through the paper, we assume that  $A^{ij}(t,x)$ are merely measurable in $t$ and have small BMO semi-norm with respect to $x$ (see  Section  \ref{sec_prelim}, Assumption \textbf{A}$(\rho,\varepsilon)$).  We also impose 
the boundedness condition
\begin{eqnarray}
|a^{ij}_{kr}(t,x)| \le \delta^{-1},\quad (t,x)\in \bR\times\bR^d_+\label{eq_intro_3}
\end{eqnarray}
for all $ i,j=1,\ldots,d$, $ k,r=1,\dots,d_1$, where $\delta>0$ is taken  from (\ref{eq_intro_1}).

The paper is organized as follows. In Section 2 we introduce weighted Sobolev spaces and our main result, Theorem \ref{thm_main_1}. In Section 3 we study  systems with coefficients depending only on $t$.  In Section 4 we obtain sharp function estimates (mean oscillation estimates) of solutions.  Finally we prove our main result in Section 5.

We use the following notations.
\begin{itemize}
\item  $D_i=\frac{\partial}{\partial x_i}$, $D_{ij}=\frac{\partial^2}{\partial x_i\partial x_j}$. For a $d_1\times 1$ matrix-valued function $u=[u^1 \cdots u^{d_1}]^{\text{tr}}$  $D_iu(x):=[D_iu^1(x)\cdots D_i u^{d_1}(x)]^{\text{tr}}$. For $u=u(t,x)$ the partial derivative of $u$ with respect to $t$, $u_t$, is understood similarly.  

\item  Throughout the proofs in this paper, the constant $N=N(\cdots)$ depends only on the parameters inside of the parentheses and can be generic along the arguments.

\item We will meet $d_1\times1$ matrix valued, $d_1\times d$ matrix valued, or $d_1\times d\times d$ tensor valued functions $f$ depending on situations.

\item  The norm notation $|A|^2$ of a matrix or tensor denotes the sum of all squares of the components of $A$. For instance, given $u=[u^1\cdots u^{d_1}]^{\text{tr}}$
$$
|u|=\sqrt{\sum_{k=1}^{d_1}|u^k|^2},\quad |Du|=\sqrt{\sum_{k}^{d_1}\sum_{i=1}^d|D_iu^k|^2},\quad |D^2u|=\sqrt{\sum_{k,i,j}|D_{i,j}u^k|^2}.
$$
\end{itemize}

\section{The description of main result}\label{sec_prelim}

In what follows we write the system (\ref{eq_intro_2}) as
\[
-u_t+A^{ij}(t,x)D_{ij}u + B^i(t,x)D_iu+C(t,x)u-\lambda u = f,
\]
assuming  the summations  upon the repeated indices. In Section 3  $A^{ij}$s depend only on  $t$ and we write
\[
-u_t+A^{ij}(t)D_{ij}u+B^iD_iu+C u-\lambda u = f.
\]

Before we state our result Theorem \ref{thm_main_1}, let us first introduce the function spaces that we use in the theorem and we play with through out this paper. 

The basic function spaces are $H_{p,\theta}^\gamma=H_{p,\theta}^\gamma(\bR^d_+)$, where $\gamma\in \bR$, which were introduced  in \cite{MR1708104} for scalar valued functions defined on $\bR^d_{+}$.  The main ingredients of these spaces are the spaces of Bessel potentials defined on $\bR^d$. In this paper we need a $d_1\times 1$ matrix valued function version of this. 

Given $p\in (1,\infty)$ let $L_p=\{f=[f^1 \,\cdots\, f^{d_1}]^{\text {tr}}:\|f\|_{L_p}=\left(\int_{\bR^d}|f|^pdx\right)^{1/p}<\infty\}$. Then for $\gamma\in\bR$ we define the space of Bessel potentials $H^{\gamma}_p$ by
$H^{\gamma}_p=(1-\Delta)^{-\gamma/2}L_p$ as the
set of all matrix valued distributions $u=[u^1\cdots u^{d_1}]^{\text{tr}}$ defined on $\bR^d$ such that $(1-\Delta)^{\gamma/2}u:=[(1-\Delta)^{\gamma/2}u^1\,\cdots \, (1-\Delta)^{\gamma/2}u^{d_1}]^{\text tr}\in
L_p$, i.e.,
$$
\|u\|_{H^{\gamma}_p}=\|(1-\Delta)^{\gamma/2}u\|_{L_p}<\infty,
$$
where $\|(1-\Delta)^{\gamma/2}u\|_{L_p}:=\|\cF^{-1}[\,(1+|\cdot|^2)^{\gamma/2}\cF(u)\,]\,\|_{L_p}$.
Here, the Fourier transform $\cF(u)$ is defined  by
$$
\cF(u)(\xi)=[\cF(u^1)(\xi)\;\cdots\;\cF(u^{d_1})(\xi)]^{\text tr},
$$
where
$$
\cF(u^k)(\xi)=\widetilde{u^k}(\xi)=\frac{1}{(2\pi)^{d/2}}\int_{\bR^d}e^{-i\xi \cdot x}u^k(x) \, dx,\quad k=1,\ldots,d_1.
$$

Now, for $p\in(1,\infty)$ we take and fix a non-negative scalar valued function $\zeta \in C_0^\infty(\bR_+)$ satisfying
\begin{equation}\label{zeta}
\sum_{n = -\infty}^{\infty} \zeta^p\left(e^{s-n}\right) \ge 1
\end{equation}
for all $s \in \bR$. 
Then for  $\gamma, \theta \in \bR$
we define our basic spaces $H_{p,\theta}^\gamma=H_{p,\theta}^\gamma(\bR^d_+)$  as the set of all matrix valued functions (or distributions) $u=[u^1\;\cdots\;u^{d_1}]^{\text{tr}}$ on $\bR^d_+$ satisfying
$$
\| u \|_{H_{p,\theta}^\gamma}^p
:= \sum_{n = -\infty}^{\infty} e^{n \theta} \|\zeta(\pi(\cdot))u(e^n \cdot) \|_{H^{\gamma}_p}^p < \infty,
$$
where $\pi(x)=\pi(x_1,x')=x_1$; we view $ \zeta(x_1)u(e^n x)$ as a matrix valued function  defined on the whole space $\bR^d$ thank to the fact that $\zeta$ has compact support in $\bR_+$.

 If $\gamma$ is a non-negative integer, due to the choice of $\zeta$ satisfying \eqref{zeta}
the following characterization is available (\cite{MR1708104}):
$$
H_{p, \theta}^{\gamma}=H_{p, \theta}^{\gamma}(\bR^d_+)
= \{ u : x_1^{|\alpha|} D^{\alpha} u \in L_{p, \theta}\  \
\forall \alpha : 0 \le |\alpha| \le \gamma \},
$$
where $L_{p,\theta}=L_{p, \theta}(\bR^{d}_{+})$ is
the weighted $L_p$-space of matrix valued  functions $f=[f^1\;\cdots\;f^{d_1}]^{\text{tr}}$ on $\bR^d_+$ satisfying 
 $ \|f\|_{L_{p,\theta}}:=\left(\int_{\bR^d_+}|f(x)|^px_1^{\theta-d}dx\right)^{1/p}<\infty$. We will denote  $M^{k} f \in L_{p,\theta}$, $k\in\bZ$ if $x_1^k f \in L_{p,\theta}$.
We record that the operators $MD_i$s  and $D_iM$s, $i=1,\ldots,d$, are bounded operators from $H_{p,\theta}^\gamma$ to $H_{p,\theta}^{\gamma-1}$ after using the corresponding lemma in \cite{MR1708104}, which deals the spaces of scalar valued functions.

For the forcing term, the solution, and their derivatives in our parabolic system, we first define the function spaces
$$
\bL_{p,\theta}((S,T)\times \bR^d_+) = L_p \big( (S,T)\times \bR^d_+;x_1^{\theta-d}dx\,dt \big)
$$
for $-\infty \le S < T \le \infty$. The functions $f=[f^1\;\cdots\;f^{d_1}]^{\text tr}$ in this space satisfy
$$
 \|f\|_{\bL_{p,\theta}((S,T)\times \bR^d_+)}=\left(\int^T_S\int_{\bR^d_+}|f(x)|^px_1^{\theta-d}dx\,dt\right)^{1/p}<\infty.
$$

In particular, if $\theta=d$,  the weight disappears and  $\bL_{p,\theta}((S,T)\times \bR^d_+) = L_p((S,T)\times \bR^d_+)$. Similarly as before, we denote  $M^{k} f \in \bL_{p,\theta}((S,T)\times \bR^d_+)$, $k\in\bZ$  if $x_1^{k} f \in \bL_{p,\theta}((S,T)\times \bR^d_+)$.

Then we design our solution space as follows. We write $u \in \fH_{p,\theta}^2((S,T)\times\bR^d_+)$ if
$$
M^{-1} u, \,\,
D_iu,  \,\,
M D_{ij} u, \,\, M u_t \in \bL_{p,\theta}((S,T)\times\bR^d_+), \quad i,j=1,\ldots,d
$$
and  define the norm of $u$ by
\begin{equation}\label{eq20210619_4}
\|u\|_{\fH_{p,\theta}^2((S,T)\times\bR^d_+)} = \|M^{-1} u \|_{p,\theta} + \|Du \|_{p,\theta} + \|M D^2 u \|_{p,\theta} + \|Mu_t\|_{p,\theta},
\end{equation}
where $\|\cdot\|_{p,\theta}$ abbreviates $\|\cdot\|_{\bL_{p,\theta}((S,T)\times\bR^d_+)}$; recall our notations $|Du|$ and $|D^2u|$.


Next, for the convenience of our arguments in this paper, given any  time-space domain $\cD\subset \bR\times \bR^d_+$ we define  $\bW^{1,2}_p(\cD)$ as  the space of matrix valued functions  $u = [u^1 \cdots u^{d_1}]^{\text{tr}}$ defined on $\cD$ satisfying
$$
u, \,\, D_iu, \,\, D_{ij}u, \,\, u_t\in L_p(\cD),  \quad i,j=1,\ldots,d
$$
and $C^{\infty}_0(\cD)$  as the space of infinitely differentiable $d_1\times 1$ matrix valued functions with compact support in $\cD$;  $\cD$ is not necessarily open.

We also define the parabolic H\"{o}lder spaces $C^{\alpha/2,\alpha}(\cD)$, $\alpha\in(0,1)$, as the set of matrix valued functions $f$ defined on $\cD$ satisfying
$$
\|f\|_{C^{\alpha/2,\alpha}(\cD)}:=\sup_{(t,x)\in\cD}|f(t,x)|+\sup_{(t,x)\ne (s,y)\in\cD}\frac{|f(t,x)-f(s,y)|}{|t-s|^{\alpha/2}+|x-y|^{\alpha}}<\infty,
$$
where each magnitude $|\cdot|$ is well understood.

Now, let us explain our  condition for the leading coefficients $A^{ij}$s.  We  frequently use the following balls, cylinders, and parabolic cylinders.  Recall the notation $x=(x_1,x')\in\bR^d_+=\bR_+\times\bR^{d-1}$, where $x'=(x_2,\ldots,x_d)$.  We define
$$
B_r'(x')=\{ y'\in \bR^{d-1}\,|\,|y'-x'|<r\},\quad Q_r'(t,x')=(t-r^2,t)\times B_r'(x'),
$$
$$
B_r(x)=(x_1-r, x_1+r) \times B'_r(x'),
\quad Q_r(t,x)=(t-r^2,t)\times B_r(x),
$$
$$
B_r^+(x)=B_r(x)\cap \bR^d_+,\quad Q_r^+(t,x)=(t-r^2,t)\times B_r^+(x).
$$
We note that the volume of the cylinder $B_r(x)$ in $\bR^d$  and the volume of the \emph{ball} $\{x\in\bR^d:|x|<r\}$ are comparable and we use $B_r(x)$ for the convenience of working with $x_1$ coordinate.
%

For  matrix valued  functions $g$ defined on $\bR\times\bR^{d}_+$ and any fixed $t\in\bR$  we define the average of $g(t,\cdot)$ over $B_r(x)$ by
$$
\dashint_{B_r(x)} g(t,z) \, dz:=\frac{1}{|B_r(x)|}\int_{B_r(x)} g(t,z) \, dz,
$$
where $|B_r(x)|$ is the Lebesgue measure of the cylinder $B_r(x)$.  Then we define
$$
[g(t,\cdot)]_{B_r(x)} = \dashint_{B_r(x)} \bigg| g(t,y) -\dashint_{B_r(x)} g(t,z) \, dz \bigg| \, dy,
$$
which measures the deviation of $g(t,\cdot)$ from the average on $B_r(x)$.  These averaging jobs are considered for each fixed $t$.

 Using them, for any $(s,y)\in\bR\times\bR^d_+$  and $r<y_1$,  we define {\bf{the mean oscillation}} of $g$ over $Q_r(s,y)=(s-r^2,s)\times(y_1-r, y_1+r) \times B'_r(y')= Q^+_r(s,y)$ with respect to the spatial variables as
$$
\text{osc}_{\sfx} \left(g,Q_r(s,y)\right)
:= \frac{1}{r^2}\int_{s-r^2}^{\,\,s}
\left[ g(\tau, \cdot)\right]_{B_r(y)} \, d\tau.
$$
Finally, for $\rho\in (1/2,1)$, we denote
$$
g^{\sfx,\#}_\rho :=\sup_{(s,y) \in \bR\times\bR^{d}_+}\;\;\sup_{r\in (0, \rho y_1]}\text{osc}_\sfx\left(g,Q_r(s,y)\right).
$$
Applying these notations to the diffusion coefficient matrices $A^{ij}$,  $i,j=1,\ldots,d$, in place of $g$, let us state the following assumption for coefficient matrices $A^{ij}$s, $B^i$s, and $C$. 


\begin{AssumpA}
							\label{assum0309_1}
For $\rho\in (1/2, 1)$ and $\varepsilon\in (0,1)$, we have the following bounded mean oscillation (BMO) condition for $A^{ij}$s and the boundedness conditions for $B^i$s, $C$:
$$
\sum_{i,j=1}^d(A^{ij})_{\rho}^{\sfx,\#} +\sup_{t,x}(|MB^i|+|M^2C|)\le \varepsilon.
$$
\end{AssumpA}
Obviously Assumption \textbf{A}$(\rho,\varepsilon)$  holds for any $\rho, \varepsilon>0$ if $A^{ij}$ depend only on $t$ and $B^i=C=0$.

Now, we are ready to state the main theorem of the paper.

\begin{theorem}[Weighted $L_p$-theory on a half space]\label{thm_main_1}
Let $T\in(-\infty,\infty]$, $\lambda\ge 0$, $p\in (1,\infty)$ and $\theta \in (d-1,d-1+p)$.  Then there exist positive constants $\rho\in (1/2,1)$ and $\varepsilon$, depending only on $d$, $d_1$, $\delta$, $p$, and $\theta$,  such that, under Assumption \textbf{A}$(\rho,\varepsilon)$, for any $u \in \fH_{p,\theta}^2((-\infty,T)\times\bR^d_+)$ satisfying the system
\begin{equation}\label{eq_main_2}							
- u_t + A^{ij}(t,x) D_{ij} u +B^i(t,x)D_i u+C(t,x)u- \lambda u = f
\end{equation}
in $(-\infty,T) \times \bR^d_+$ with $Mf \in \bL_{p,\theta}((-\infty,T)\times \bR^d_+)$, we have the estimate
\begin{eqnarray}
\lambda \|Mu\|_{p,\theta}+\sqrt{\lambda} \|MDu\|_{p,\theta}+\|u\|_{\fH^2_{p,\theta}}\leq N\|Mf\|_{p,\theta}\label{eq_main_1}
\end{eqnarray}
where $\|\cdot\|_{p,\theta}=\|\cdot\|_{\bL_{p,\theta}((-\infty,T)\times \bR^d)}, \|\cdot\|_{\fH^2_{p,\theta}}= \|\cdot\|_{\fH^2_{p,\theta}((-\infty,T)\times \bR^d)}$, and $N$ depends only on $d,d_1,\delta,p,\theta$, and $\rho$. Moreover, for any $f$ satisfying $M f \in \bL_{p,\theta}((-\infty,T) \times \bR^d_+)$, there exists a unique solution $u \in \fH_{p,\theta}^2((-\infty,T) \times \bR^d_+)$ to the system  (\ref{eq_main_2}).
\end{theorem}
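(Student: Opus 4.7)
The plan is the standard $L_p$-theory scheme: derive the a priori estimate \eqref{eq_main_1} for any $u \in \fH_{p,\theta}^2$ solving \eqref{eq_main_2}, then obtain existence by the method of continuity connecting the full operator to the reference system with coefficients depending only on $t$ (whose solvability is established in Section 3); uniqueness is immediate from \eqref{eq_main_1} applied to differences. As a preliminary reduction, Assumption A$(\rho,\varepsilon)$ gives the pointwise bounds $|B^i|\le \varepsilon/x_1$ and $|C|\le \varepsilon/x_1^2$, so
\[
\|MB^iD_i u\|_{p,\theta}\le \varepsilon\|Du\|_{p,\theta},\qquad \|MCu\|_{p,\theta}\le \varepsilon\|M^{-1}u\|_{p,\theta},
\]
both of which sit inside $\|u\|_{\fH^2_{p,\theta}}$; choosing $\varepsilon$ small enough lets me absorb these terms once the estimate is established for the principal part.

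The estimate for the principal part proceeds in two stages, as flagged in the introduction. \emph{Stage one}: control of the first derivatives. The sharp function estimate for $Du$ produced in Section 4, combined with the Fefferman--Stein theorem and the Hardy--Littlewood maximal function theorem with $A_p$ weights, yields a bound on $\|Du\|_{p,\theta}$ by $\|Mf\|_{p,\theta}$. The crucial arithmetic fact (see Remark \ref{re0126_1}) is that the weight $x_1^{\theta-d}$ lies in the Muckenhoupt class $A_p$ precisely when $\theta\in(d-1,d-1+p)$, which is exactly why the approach captures the full optimal range. The weighted Hardy inequality (valid in this range) then supplies $\|M^{-1}u\|_{p,\theta}\le N\|Du\|_{p,\theta}$, and a standard manipulation of the $\lambda u$ term upgrades the estimate to include $\lambda\|Mu\|_{p,\theta}+\sqrt{\lambda}\|MDu\|_{p,\theta}$ on the left.

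\emph{Stage two}: passage to the $MD^2u$ and $Mu_t$ pieces via a Whitney-type partition of unity. Cover $\bR\times \bR^d_+$ by parabolic cylinders $Q_n$ centered at points with first spatial coordinate $x_n^1$ and radius $\kappa x_n^1$ for a small $\kappa$. Since each $Q_n$ stays uniformly away from $\{x_1=0\}$, on $Q_n$ the system is an interior one, and the classical unweighted $W^{1,2}_p$-solvability for parabolic systems with measurable-in-$t$ and small-BMO-in-$x$ coefficients produces local estimates of $\|D^2u\|_{L_p(Q_n')}$ and $\|u_t\|_{L_p(Q_n')}$ in terms of $\|f\|_{L_p(Q_n)}$, $\|Du\|_{L_p(Q_n)}$, $\|u\|_{L_p(Q_n)}$, and $\lambda\|u\|_{L_p(Q_n)}$. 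Multiplying each local estimate by the appropriate power of $x_n^1$ (so that the resulting quantities coincide with the weighted norms of $MD^2u$, $Mu_t$, $Mf$, $Du$, $M^{-1}u$, $\lambda Mu$ on $Q_n$), raising to the $p$-th power, and summing over $n$ using the bounded overlap of the cover produces
\[
\|MD^2u\|_{p,\theta}+\|Mu_t\|_{p,\theta}\le N\bigl(\|Mf\|_{p,\theta}+\|Du\|_{p,\theta}+\|M^{-1}u\|_{p,\theta}+\lambda\|Mu\|_{p,\theta}\bigr),
\]
which together with Stage one closes \eqref{eq_main_1}. The method of continuity from Section 3 then delivers existence.

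The main obstacle I expect is Stage one: producing a sharp function bound on $Du$ that is compatible with $A_p$ weights throughout $\theta\in(d-1,d-1+p)$, given only measurability of $A^{ij}$ in $t$ and small BMO in $x$. This is precisely why the argument operates on the first derivatives rather than the second at the sharp-function level: the weight naturally associated with the $D^2u$ piece is $x_1^{\theta-d+p}$, which fails to be $A_p$ for the range of $\theta$ at hand, ruling out a direct Fefferman--Stein argument and forcing the partition-of-unity detour of Stage two.
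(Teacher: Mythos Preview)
Your two-stage outline names the right ingredients, but the stages are interlocked in a way your plan does not reflect, and a localization step is missing from where it is actually needed.

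First, the Section~4 sharp-function estimate is for $A^{ij}=A^{ij}(t)$ only. To treat $A^{ij}(t,x)$ you must freeze the spatial variable and throw $(\bar A^{ij}(t)-A^{ij}(t,x))D_{ij}u$ onto the right-hand side. The mean-oscillation bound on $Du$ then picks up an extra term of order $\varepsilon^{1/(\beta'q)}\bigl(|MD^2u|^{\beta q}\bigr)^{1/(\beta q)}_{Q_{\kappa r}^+}$ (this is Lemma~\ref{lem0201_1}). After Fefferman--Stein you therefore do \emph{not} obtain $\|Du\|_{p,\theta}\le N\|Mf\|_{p,\theta}$; you obtain that plus $N\varepsilon^{1/(\beta'q)}\|MD^2u\|_{p,\theta}$. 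Absorbing this requires your Stage~two estimate (which is Lemma~\ref{lemma_main_1}) \emph{inside} Stage~one, not after it. This is exactly how Proposition~\ref{proposition_main_1} is assembled.

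Second, and this is the genuine gap: the freezing error is small only on cylinders $Q_{\kappa r}^+(s,y)$ with $\kappa r\le \rho y_1$, because Assumption~A$(\rho,\varepsilon)$ says nothing about oscillations over larger cylinders or cylinders touching $\partial\bR^d_+$. For such cylinders there is no smallness available unless $u$ is already compactly supported in some $Q_R(h)$ with $R<\rho h$; then one may replace the offending cylinder by $Q_R(h)$ at the cost of a constant depending on $\rho$ (see Case~2 in the proof of Lemma~\ref{lem0201_1}). Hence the sharp-function argument applies only to compactly supported pieces, and the paper introduces the partition of unity $\{\eta_k\}$ \emph{before} the sharp-function step (step~2 of the proof of Theorem~\ref{thm_main_1}), applies Proposition~\ref{proposition_main_1} to each $u\eta_k$, and sums. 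Your partition of unity lives in Stage~two, which is too late: it is needed at the top level, wrapping both stages.
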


\begin{remark}
The range of $\theta$ in Theorem  \ref{thm_main_1} is sharp. If $\theta\not \in (d-1,d-1+p)$, then the theorem does not hold even for the heat equation. See \cite{MR1708104} for an explanation. We point out that our result for systems, Theorem \ref{thm_main_1}, preserves this sharp range of $\theta$. 
\end{remark}

\section{Systems with coefficients depending only on $t$}
In this section  all $A^{ij}$s  depend only on $t$ and  are merely measurable. We consider the system
$$
-u_t+A^{ij}(t)D_{ij}u-\lambda u =f.
$$

 Let us recall 
$\mathbb{W}_p^{1,2}\left((-\infty,T) \times \Omega\right)$, the Sobolev space  of the $d_1\times 1$ matrix valued functions  $u$ on $(-\infty,T) \times \Omega$ satisfying 
$$
\|u\|_{\mathbb{W}_p^{1,2}\left((-\infty,T) \times \Omega\right)}\\
:=\|D^2u\|_p+\|Du\|_p+\|u\|_p+\|u_t\|_p <\infty,
$$
where $\|\cdot\|_p=\|\cdot\|_{\bL_p((-\infty,T)\times \Omega)}$.

\begin{proposition}[Unweighted $L_p$-theory on the whole space or a half space]
                                                         \label{pro entire}
Let  $T\in (-\infty, \infty]$, $\lambda \geq0$,  $p\in(1,\infty)$, and $\Omega = \bR^d$ or $\Omega = \bR^d_+$.  Then for any  $u \in \mathbb{W}_p^{1,2}\left((-\infty,T) \times \Omega\right)$ satisfying   the system
\begin{equation}\label{eq_time_4}
-u_t+A^{ij}(t)D_{ij}u-\lambda u =f, \quad (t,x)\in (-\infty, T)\times \Omega
\end{equation}
and  $u(t,0,x') = 0$ in the case of $\Omega = \bR^d_+$,  where $f\in \bL_p((-\infty,T)\times \Omega)$,  we have the estimate
\begin{equation}  \label{eq0119_1}
\lambda \|u\|_{p}+\sqrt{\lambda}\|Du\|_{p}
+\|D^2u\|_{p}+\|u_t\|_{p}\leq N\|f\|_{p},
\end{equation}
where  $N$ depends only on $d,d_1, \delta, p$.
Moreover, for any $\lambda > 0$ and $f \in \bL_p\left((-\infty,T) \times \Omega\right)$, 
there exists a unique $u \in \bW_p^{1,2}\left((-\infty,T) \times \Omega\right)$ satisfying the system \eqref{eq_time_4}  and the Dirichlet condition $u(t,0,x') = 0$ in the case of $\Omega = \bR^d_+$.
\end{proposition}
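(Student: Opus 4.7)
The plan is a three-stage argument: an $L_2$ base case via Fourier analysis, a Calder\'on--Zygmund passage from $L_2$ to general $L_p$ through mean-oscillation estimates on homogeneous solutions, and a separate up-to-the-boundary analysis when $\Omega = \bR^d_+$. The crucial structural feature throughout is that $A^{ij}$ depends only on $t$, so any $x$-derivative of a solution to the homogeneous system is again a solution of the same system.

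For $\Omega = \bR^d$ and $p=2$, I would take the spatial Fourier transform so that the system becomes a linear ODE in $t$ with symbol $M(t,\xi) = \sum_{i,j} \xi_i \xi_j A^{ij}(t)$. The Legendre--Hadamard condition (\ref{eq_intro_1}) gives $\Re(\theta^{\text{tr}} M(t,\xi)\bar\theta) \ge \delta|\xi|^2|\theta|^2$, so the propagator generated by $-M(t,\xi)-\lambda I$ decays like $e^{-(\delta|\xi|^2+\lambda)(t-s)}$; Duhamel plus Plancherel then yields the full $L_2$ bound with the correct $\lambda$-weights on $\lambda u$, $\sqrt\lambda Du$, $D^2 u$, and $u_t$. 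For general $p$ on $\bR^d$, I would run a Krylov-type mean-oscillation argument: split $u$ on a parabolic cylinder $Q_R$ into a homogeneous solution $w$ and a correction driven by $f$. Since $A^{ij}$ is $x$-independent, all $x$-derivatives of $w$ solve the same homogeneous system, so iterating the $L_2$-estimate together with Sobolev embedding gives $L_\infty$ control of $D^2 w$ and $w_t$ on $Q_{R/2}$ from $L_2$-averages on $Q_R$. This produces a Campanato-type oscillation decay for $D^2 u$; the Fefferman--Stein sharp-function theorem combined with the Hardy--Littlewood maximal inequality then delivers the $L_p$ estimate for $p \ge 2$. The range $1 < p < 2$ follows by duality against the formal adjoint system $-v_t + D_{ij}\bigl((A^{ij})^{\text{tr}} v\bigr) - \lambda v = g$, which has the same structure, and solvability at $\lambda > 0$ follows by the method of continuity connecting $A^{ij}(t)$ to $\delta^{ij} I$, with the heat system as the base case.

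For $\Omega = \bR^d_+$ with $u(t,0,x')=0$, a naive odd reflection across $\{x_1=0\}$ fails because the cross coefficients $A^{1j}$ and $A^{j1}$ for $j \neq 1$ would flip sign, so I would instead establish the boundary version of the mean-oscillation estimate directly on half-cylinders $Q_r^+$. The $L_2$ bound up to the boundary comes from testing against $\bar u$ and integrating by parts, where the boundary values of $u$ kill the boundary term. For the oscillation decay, let $w$ solve the homogeneous system with $w|_{x_1=0}=0$; then $w_t$ and the tangential derivatives $D_j w$ ($j \neq 1$) also solve the system and still vanish on $\{x_1=0\}$, so iteration yields arbitrary regularity of $w$ in the tangential variables. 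The equation $A^{11}(t) D_{11} w = -\sum_{(i,j)\neq(1,1)} A^{ij}(t) D_{ij} w + w_t$ then recovers $D_{11} w$ pointwise by inverting $A^{11}(t)$, which is invertible by (\ref{eq_intro_1}) with $\xi = e_1$. Combining the boundary and interior estimates via a covering/cutoff argument and feeding into Fefferman--Stein closes the $L_p$ bound; existence again follows by continuity from the heat system.

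The hardest step is this boundary regularity of $w$: one must derive pointwise control of $D_{11} w$ from purely algebraic invertibility of $A^{11}(t)$, without any time continuity of the coefficients and without a reflection trick. A secondary technical point is the book-keeping of the $\lambda$ and $\sqrt\lambda$ factors in front of the lower-order terms through the sharp-function machinery, which I would handle by the standard device of absorbing $\lambda u$ into the right-hand side and rescaling in $t$.
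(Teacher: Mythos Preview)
Your proposal is correct but takes a completely different and much more elaborate route than the paper. The paper's proof is essentially a citation: the result is a special case of Theorems~2 and~4 in \cite{MR2771670} (Dong--Kim), which handle higher-order systems with coefficients measurable in $t$ and small BMO in $x$; when $A^{ij}=A^{ij}(t)$ the spatial mean oscillation is zero. The only real work in the paper's proof is a scaling argument showing that the threshold $\lambda_0$ from \cite{MR2771670} can be taken to be $0$: for $\lambda\in(0,\lambda_0)$ one rescales $u(Rt,\sqrt{R}x)$ with $R=\lambda_0/\lambda$, and for $\lambda=0$ one writes $-u_t+A^{ij}D_{ij}u-\varepsilon u=f-\varepsilon u$ and lets $\varepsilon\searrow 0$.

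Your sketch, by contrast, reconstructs the content of \cite{MR2771670} from scratch: Fourier/Plancherel for $L_2$ on $\bR^d$, a Krylov-type mean-oscillation argument plus Fefferman--Stein to reach $p>2$, duality for $1<p<2$, and for $\bR^d_+$ a direct boundary regularity argument based on tangential differentiation and recovery of $D_{11}w$ by inverting $A^{11}(t)$ (invertible by \eqref{eq_intro_1} with $\xi=e_1$). This is the right architecture and is essentially how such results are proved; it buys self-containment and reveals the mechanism, whereas the paper's approach buys brevity by outsourcing the work. One small remark: in your half-space $L_2$ step, testing against $\bar u$ gives only an energy bound on $Du$; the full $D^2u$ estimate still needs the tangential-derivative/equation-inversion step or an appeal to the whole-space result plus localization, which you describe later but not at the $L_2$ stage.
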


\begin{proof}
This proposition can be derived from \cite[Theorem 2, Theorem 4]{MR2771670}, where the results are proved for higher order systems (including second order systems) with $\lambda \geq \lambda_0$ for some $ \lambda_0\geq 0$  under the conditions that $A^{ij}$ are measurable in $t$ and have small mean oscillations in $x$.

If $A^{ij}$ depend only on $t$, the mean oscillations in $x$ vanish and 
one can be free from the restriction $\lambda\ge\lambda_0$ relying on  the usual scaling argument.
Indeed, if $\lambda_0>0$ and $\lambda \in (0,\lambda_0)$, then we set $R = \lambda_0/\lambda$ and consider the vector-valued function 
$$
\tilde{u}(t,x) = R^{-1} u(Rt, \sqrt{R}x).
$$
This function satisfies the system
$$
-\tilde{u}_t + A^{ij}(Rt) D_{ij} \tilde{u} - \lambda_0 \tilde{u} = \tilde{f}
$$
on $(-\infty, T/R) \times \Omega$, where $\tilde{f}(t,x) = f(Rt, \sqrt{R}x)$.
Then, since the coefficients $A^{ij}(Rt)$ satisfy the same conditions as $A^{ij}(t)$,  \cite[Theorem 2, Theorem 4]{MR2771670} can be applied and  we obtain the estimate
$$
\lambda_0 \|\tilde{u}\|_{p}+\sqrt{\lambda_0}\|D\tilde{u}\|_{p}
+\|D^2\tilde{u}\|_{p}+\|\tilde{u}_t\|_{p}\leq N\|\tilde{f}\|_{p},
$$
where $\|\cdot\|_p = \|\cdot\|_{L_p\left((0,R^{-1}T)\times\Omega\right)}$. 
We note that the constant $N$ is independent of the upper limit of the time interval.
Then we scale back to $u$ and have \eqref{eq0119_1}.
Now, for small constant $\varepsilon > 0$ we write 
$$
-u_t + A^{ij}(t) D_{ij}u - \varepsilon u = f - \varepsilon u
$$
on $(-\infty,T) \times \Omega$. Applying the estimate just proved for small $\lambda > 0$, we have
$$
\varepsilon \|u\|_{p}+\sqrt{\varepsilon}\|Du\|_{p}
+\|D^2u\|_{p}+\|u_t\|_{p}\leq N\|f\|_{p} + N \varepsilon\|u\|_p.
$$
Hence,  letting $\varepsilon \searrow 0$,  we obtain \eqref{eq0119_1} for the case $\lambda=0$.

Then for the second statement of our proposition we follow the routine based on the a priori estimate \eqref{eq0119_1}, the method of continuity, and the unique solvability of the system consisting of $d_1$ independent (not mixed) heat equations.
\end{proof}

 Proposition \ref{pro entire}  leads us to the following lemma that involves weights near the boundary of the half space. We recall the definition of the space $\fH^2_{p,\theta}((-\infty,T)\times \mathbb{R}^d_+)$  from Section  \ref{sec_prelim}. 
\begin{lemma}
   \label{lem half}
Let $T\in (-\infty, \infty]$, $\lambda \geq 0$,  $p>1$, and $\theta\in (d-p,\infty)$. Then for any   $u\in \fH^2_{p,\theta}((-\infty,T)\times \bR^d_+)$  satisfying the system
$$
-u_t+A^{ij}(t)D_{ij}u-\lambda u =f
$$
on $ (-\infty, T)\times \bR^d_+$,  where $Mf \in \mathbb{L}_{p,\theta}((-\infty,T)\times \mathbb{R}_+^d)$, we have the estimate
\begin{eqnarray}
 \lambda \|Mu\|_{p,\theta}+ \sqrt{\lambda} \|MDu\|_{p,\theta} + \|u\|_{\fH^2_{p,\theta}}
\leq
N(\|M^{-1}u\|_{p,\theta}+\|Mf\|_{p,\theta}) \label{eq0129_6},
\end{eqnarray}
 where   $\|\cdot\|_{p,\theta}=\|\cdot\|_{\bL_{p,\theta}((-\infty,T)\times \bR^d_+)}$ and $N$ depends only on  $d,d_1,\delta, p$ and $\theta$.
The same conclusion holds if $\theta \in (d-1, d-1+p)$,  $u \in \bW_p^{1,2}\left((-\infty,T) \times \bR^d_+\right)$,  $u(t,0,x') = 0$, and $Mf \in \mathbb{L}_{p,\theta}((-\infty,T)\times \mathbb{R}_+^d)$.
In this case, $u \in \fH_{p,\theta}^2\left((-\infty,T) \times \bR^d_+\right)$.
\end{lemma}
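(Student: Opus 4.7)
The plan is to reduce the weighted half-space estimate to Proposition~\ref{pro entire} (the unweighted whole-space result) via a dyadic partition of unity in $x_1$ compatible with the scaling of the weight $x_1^{\theta-d}$. Fix a nonnegative $\zeta\in C_0^{\infty}(\bR_+)$ supported in a compact subinterval of $(0,\infty)$ with $\sum_{n\in\bZ}\zeta(e^{-n}\cdot)^p\equiv 1$, set $\zeta_n(x_1):=\zeta(e^{-n}x_1)$, and let $v_n:=u\zeta_n$. Each $v_n$ is supported in the annulus $\{x_1\sim e^n\}$, hence extends by zero to $\bR\times\bR^d$, and since $D_i\zeta_n=\delta_{i1}\zeta_n'$ and $D_{ij}\zeta_n=\delta_{i1}\delta_{j1}\zeta_n''$, a direct computation gives
\[
-\partial_t v_n+A^{ij}(t)D_{ij}v_n-\lambda v_n=\zeta_n f+\zeta_n'\bigl(A^{1j}+A^{j1}\bigr)D_j u+A^{11}\zeta_n''\,u=:F_n.
\]

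Applying Proposition~\ref{pro entire} to $v_n$ on $\bR\times\bR^d$ and exploiting that on $\operatorname{supp}\zeta_n$ one has $M\sim e^n$, $|\zeta_n'|\lesssim M^{-1}$, $|\zeta_n''|\lesssim M^{-2}$, and that any $g$ supported in this annulus satisfies $\|Mg\|_{p,\theta}^p\asymp e^{np+n(\theta-d)}\|g\|_p^p$, I multiply the resulting $L_p$-estimate by $e^{np+n(\theta-d)}$ to convert it into an $\bL_{p,\theta}$-bound for $Mv_n$, $MDv_n$, $MD^2v_n$, $M\partial_t v_n$ whose right-hand side is controlled by $\|M\zeta_n f\|_{p,\theta}^p+\|\chi_n Du\|_{p,\theta}^p+\|M^{-1}\chi_n u\|_{p,\theta}^p$, where $\chi_n:=\mathbf{1}_{\operatorname{supp}\zeta_n'}$. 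Passing from $\zeta_n M D^j u$ to $M D^j v_n$ produces commutator terms bounded by $\chi_n|Du|+\chi_n|M^{-1}u|$ in the $MD^2u$ line and by $\chi_n|u|$ in the $\sqrt\lambda\,MDu$ line. Summing over $n$ using $\sum\zeta_n^p\equiv 1$ and the bounded overlap of the $\chi_n$'s, and absorbing the stray $\lambda^{p/2}\|u\|_{p,\theta}^p$ via the Cauchy--Schwarz bound $\|u\|_{p,\theta}^p\le\|Mu\|_{p,\theta}^{p/2}\|M^{-1}u\|_{p,\theta}^{p/2}$ followed by Young's inequality into $\varepsilon\lambda^p\|Mu\|_{p,\theta}^p+C_\varepsilon\|M^{-1}u\|_{p,\theta}^p$, I arrive at
\[
\lambda^p\|Mu\|_{p,\theta}^p+\lambda^{p/2}\|MDu\|_{p,\theta}^p+\|MD^2u\|_{p,\theta}^p+\|Mu_t\|_{p,\theta}^p\le N\bigl(\|Mf\|_{p,\theta}^p+\|Du\|_{p,\theta}^p+\|M^{-1}u\|_{p,\theta}^p\bigr).
\]

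The main obstacle is that $\|Du\|_{p,\theta}$ appears on both sides. To close the estimate I would invoke the standard Gagliardo--Nirenberg interpolation in Krylov's scale $H_{p,\theta-p}^{2}$, whose norm is equivalent to $\|M^{-1}u\|_{p,\theta}+\|Du\|_{p,\theta}+\|MD^2u\|_{p,\theta}$:
\[
\|Du\|_{p,\theta}\le\varepsilon\|MD^2u\|_{p,\theta}+C_\varepsilon\|M^{-1}u\|_{p,\theta}.
\]
Absorbing $\varepsilon\|MD^2u\|_{p,\theta}^p$ into the left-hand side and then using the interpolation once more to reinstate $\|Du\|_{p,\theta}$ on the left yields \eqref{eq0129_6}.

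For the last assertion I must upgrade a $W_p^{1,2}$-solution $u$ with $u(t,0,x')=0$ and $Mf\in\bL_{p,\theta}$, $\theta\in(d-1,d-1+p)$, to an $\fH_{p,\theta}^{2}$-solution. The key input is the one-dimensional Hardy inequality: for $\gamma<p-1$ and $w(0)=0$,
\[
\int_0^{\infty}|w(x_1)|^p x_1^{\gamma-p}\,dx_1\le C(p,\gamma)\int_0^{\infty}|w'(x_1)|^p x_1^{\gamma}\,dx_1,
\]
which, applied fiberwise with $\gamma=\theta-d\in(-1,p-1)$, gives $\|M^{-1}u\|_{p,\theta}\le N\|D_1u\|_{p,\theta}$. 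Approximating $u$ by $u\eta_k$ with $\eta_k\in C^{\infty}(\bR_+)$ cutting off $\{x_1<1/k\}$ (so each $u\eta_k\in\fH_{p,\theta}^{2}$ trivially), applying \eqref{eq0129_6} to these approximants, and using Hardy to control $\|M^{-1}(u\eta_k)\|_{p,\theta}$ uniformly, yields $u\in\fH_{p,\theta}^{2}$ and \eqref{eq0129_6} upon passing to the limit.
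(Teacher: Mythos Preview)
Your argument for the main estimate follows the paper's: you use a dyadic partition $\zeta_n(x_1)=\zeta(e^{-n}x_1)$ and sum in $n$, whereas the paper uses the continuous family $\zeta_r(x_1)=\zeta(rx_1)$ and integrates $\int_0^\infty(\cdot)\,r^{-p-\theta+d-1}\,dr$. Both localize to annuli $\{x_1\sim e^n\}$ (resp.\ $\{x_1\sim r^{-1}\}$), apply Proposition~\ref{pro entire}, and reassemble. Your absorption of the $\|Du\|_{p,\theta}$ term on the right via the weighted interpolation $\|Du\|_{p,\theta}\le\varepsilon\|MD^2u\|_{p,\theta}+C_\varepsilon\|M^{-1}u\|_{p,\theta}$ is correct and in fact more explicit than the paper's presentation of that step.

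For the second assertion, however, your approximation scheme has a gap. To apply the already-proved estimate \eqref{eq0129_6} to $u\eta_k$ and pass to the limit you must uniformly control both $\|M^{-1}(u\eta_k)\|_{p,\theta}$ and $\|Mf_k\|_{p,\theta}$, where $f_k$ contains the commutators $M\eta_k'\,Du$ and $M\eta_k''\,u$. Hardy gives $\|M^{-1}(u\eta_k)\|_{p,\theta}\le N\|D_1(u\eta_k)\|_{p,\theta}$, but the right-hand side --- and likewise $\|M\eta_k'Du\|_{p,\theta}$ --- is controlled by $\|Du\|_{p,\theta}$ on the transition layer, whose uniform finiteness you do not know: for $u\in W_p^{1,2}$ the quantity $\int|Du|^p x_1^{\theta-d}\,dx$ may well be infinite when $\theta\neq d$ (the weight blows up either at $0$ or at $\infty$). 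So the argument is circular as written. The paper avoids this by verifying \emph{directly} that $\|M^{-1}u\|_{p,\theta}<\infty$: it splits into $\{x_1<1\}$ and $\{x_1\ge1\}$, uses $u(t,0,x')=0$ and Hardy (exponent $\theta-d-p<-1$) on the first piece to get $\|M^{-1}u\,I_{(0,1)}\|_{p,\theta}\le N\|Du\,I_{(0,1)}\|_{p,\theta}$, and then bounds $\|Du\,I_{(0,1)}\|_{p,\theta}$ by a \emph{second} local Hardy inequality --- now using $\theta-d>-1$, which requires no vanishing of $Du$ at $x_1=0$ --- against $\|M(|D^2u|+|Du|+|u|)\,I_{(0,2)}\|_{p,\theta}\le N\|u\|_{W_p^{1,2}}$. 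Once the relevant weighted norms are known to be finite, the cutoff argument of Part~1 runs verbatim and yields $u\in\fH_{p,\theta}^2$.
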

\begin{proof}
We can prove this lemma by following the proof of  Lemma 2.2 in \cite{MR1690093}.  Also see for instance Theorem 3.5 in \cite{MR3318165}.  These deal with single equations of course and  we need a version for  systems. The idea of the proof is delightful and for the reader's convenience we provide a proof  below.

1. We intend to use Proposition \ref{pro entire} to pursuit the weighted norms. For this we start with the following.
Take and fix a function $\zeta=\zeta(s)\in C^{\infty}_0(\bR_+)$ satisfying
$$
\int^{\infty}_0 |\zeta(s)|^ps^{-p-\theta+d-1} ds =1,
$$
and for any index $r>0$ we define the function $\zeta_r(x_1):=\zeta(rx_1)$ for $x_1>0$. Then for any matrix-valued  functions $g$ defined on $\mathbb{R}^d_+$,  by Fubini's theorem and change of variables, the following three hold: 
$$
\int^{\infty}_0\int_{\bR^d_+}|\zeta_r(x_1)g(x)|^pdx\; r^{-p-\theta+d-1}dr=\int_{\bR^d_+}|x_1g(x)|^px_1^{\theta-d}dx,
$$
\begin{eqnarray*}
&&\int_0^\infty \int_{\bR^d_+} \left| \zeta'_r(x_1)  g(x)\right|^p \, dx \, r^{-p - \theta + d - 1} \, dr
\\&
=& \int_0^\infty\int_{\bR^d_+} |r \zeta'(r x_1) g(x)|^p \, dx \, r^{-p-\theta+d-1} \, dr = N\int_{\bR^d_+}|g(x)|^px_1^{\theta-d}dx,
\end{eqnarray*}
where
$$
N = N(d,p,\theta) = \int_0^\infty |\zeta'(s)|^p s^{-\theta+d-1} \, ds,
$$
and
$$
\int^{\infty}_0\int_{\bR^d_+}|\zeta''_r(x_1)  g(x)|^pdx \;r^{-p-\theta+d-1}dr= N\int_{\bR^d_+}|x^{-1}_1g(x)|^px_1^{\theta-d}dx,
$$
where in this case
$$
N = N(d,p,\theta) = \int_0^\infty |\zeta''(s)|^p s^{-p-\theta+d-1} \, ds.
$$
Utilizing the idea of the computation, we also have
$$
\int^{\infty}_0\int_{\bR^d_+}|r\zeta^{(n)}_r(x_1)  g(x)|^pdx \;r^{-p-\theta+d-1}dr= N\int_{\bR^d_+}|x^{-n}_1g(x)|^px_1^{\theta-d}dx,\; n=0,1
$$
and
$$
\int^{\infty}_0\int_{\bR^d_+}|r^{-1}\zeta^{(n)}_r(x_1)  g(x)|^pdx \;r^{-p-\theta+d-1}dr= N\int_{\bR^d_+}|x^{2-n}_1g(x)|^px_1^{\theta-d}dx,\;
n=1,2,3.
$$

We note that the integrals on $\bR^d_+$ are the same as the integrals on $\bR^d$ once we extend the matrix-valued functions inside of $|\cdot|^p$s to the whole space by zero matrices. 

2.  As we have just mentioned, using $\zeta_r$ defined in step 1, we regard  $\zeta_r(x_1)u(t,x)$ as a matrix valued function  defined on $(-\infty,T)\times \bR^d$ by extending  $\zeta_ru$   to be $d_1\times 1$ zero matrix  on $(-\infty,T)\times \{x=(x_1,x')\in\bR^d:x_1\le 0\}$.
Recalling the summation rule upon the repeated indices, we observe that the matrix-valued function $\zeta_r u$ satisfies
\begin{eqnarray}
							\label{eq_time_1}
&&-(\zeta_r u)_t+A^{ij}(t)D_{ij}(\zeta_ru)-\lambda \zeta_ru
\nonumber\\
&=&\zeta_rf+A^{i1}(t)\zeta'_rD_iu+ A^{1j}(t)\zeta'_rD_ju+A^{11}(t)\zeta''_r u
\end{eqnarray}
as
\begin{eqnarray}
&&D_j(\zeta_r u)=\zeta_r D_ju+\textbf{1}_{j=1}\zeta'_ru,\nonumber\\
 &&D_{ij}(\zeta_r u)=\zeta_r D_{ij}u+\textbf{1}_{i=1}\zeta'_r D_j u+\textbf{1}_{j=1}\zeta'_r D_i u+ \textbf{1}_{i=1}\textbf{1}_{j=1}\zeta''_ru \label{eq_time_2}
\end{eqnarray}
for each $i,j=1,\ldots,d$, and $\zeta_r$ is a function of $x_1$, where $\textbf{1}_{j=1}=1$ if $j=1$ and $0$ otherwise.  Since  the compact support of $\zeta_r$ is located away from $x_1=0$,  $u\in \fH^2_{p,\theta}((-\infty,T)\times \mathbb{R}^d_+)$ implies  $\zeta_r u\in \bW^{1,2}_p((-\infty,T)\times \bR^d)$. Then  (\ref{eq_time_1}) with the observation that the right hand side of (\ref{eq_time_1}) is in $\bL_p((-\infty,T)\times \mathbb{R}^d)$,  the condition \eqref{eq_intro_3} and Proposition \ref{pro entire} lead us to
\begin{multline}
							\label{eq0129_3}
\lambda^p \|\zeta_r u\|^p_p+\lambda^{p/2}\|D(\zeta_r u)\|^p_p+\|D^2(\zeta_r u) \|^p_p+\| (\zeta_r u)_t\|^p_p
\\
\leq  N \left(\|\zeta f\|^p_p+\|\zeta'_rDu\|^p_p+\|\zeta''_ru\|^p_p\right),\end{multline}
where $\|\cdot\|_p=\|\cdot\|_{L_p((-\infty,T)\times \bR^d)}$ and $N=N(d,d_1,\delta,p)$.
From \eqref{eq0129_3} and the relation \eqref{eq_time_2}, we obtain that
\begin{equation*}
\lambda^p \|\zeta_r u\|_p^p + \|\zeta_r D^2u\|_p^p + \|\zeta_r u_t\|_p^p \leq N \left(\|\zeta f\|^p_p+\|\zeta'_rDu\|^p_p+\|\zeta''_ru\|^p_p\right).
\end{equation*}
Then using this estimate along with \eqref{eq0129_3} and the relations derived from \eqref{eq_time_2}
$$
\zeta_r' D_1 u = \frac{1}{2}\left( D_{11}(\zeta_r u) - \zeta_r D_{11} u - \zeta_r'' u \right),
$$
$$
\zeta_r' D_ju = D_{1j}(\zeta_r u) - \zeta_r D_{1j}u, \quad j \neq 1,
$$
we have
$$
 \lambda^p \|\zeta_r u\|_p^p+\|\zeta_r' Du\|_p^p+ \|\zeta_r D^2u\|_p^p + \|\zeta_r u_t\|_p^p \leq N \left(\|\zeta_r f\|^p_p+\|\zeta'_rDu\|^p_p+\|\zeta''_ru\|^p_p\right).
$$
Now, multiplying both sides  of this inequality by $r^{-p-\theta+d-1}$,  integrating with respect to $r$ over $(0,\infty)$, and using step 1, we get
\begin{multline}
\label{eq_almost}
\lambda^p \|Mu\|^p_{p,\theta}+\|Du\|_{p,\theta}^p +\|MD^2u\|^p_{p,\theta}+\|Mu_t\|^p_{p,\theta}
\\
\leq N\left(\|Mf\|^p_{p,\theta}+\|Du\|^p_{p,\theta}+|M^{-1}u\|^p_{p,\theta}\right),
\end{multline}
where  $N=N(d,d_1,\delta,p,\theta)$. To arrive at  \eqref{eq0129_6} from here, on the one hand we bring in the interpolation inequality (see \cite[Lemma 3.3]{MR3318165}),
\begin{equation}\label{eq20210624_1}
\sqrt{\lambda}\|M Du\|_{p,\theta} \leq N \lambda \|M u\|_{p,\theta} + N \|M D^2u\|_{p,\theta}
\end{equation}
which holds for $\theta -d+p> 0$ i.e. $\theta>d-p$,
where $N$ is a universal constant (independent of $d$, $u$, $p$, and $\theta$).
On the other hand we dominate the term $\|Du\|^p_{p,\theta}$ on the right hand side of \eqref{eq_almost} by another interpolation
\begin{equation}\label{eq210608_1}
\|Du\|^p_{p,\theta}\le \varepsilon (N\|MD^2u\|^p_{p,\theta}+ N \|Du\|^p_{p,\theta})+N\|M^{-1} u\|^p_{p,\theta}
\end{equation}
for any $\varepsilon>0$, where the first two $N$s do not depend on $\varepsilon$.
Indeed, by identity  $\zeta'_r D_ju=D_j(\zeta'_r u)-\mathbf{1}_{j=1}\zeta''_r u$, the usual interpolation of Sobolev norms, and step 1, we have
\begin{eqnarray*}
\|Du\|^p_{p,\theta}&=&\int^{\infty}_0 \|\zeta'_r Du\|^p_p\; r^{-p-\theta+d-1}dr\\
&\le &\int^{\infty}_0\left(\varepsilon r^{-p}\|D^2(\zeta'_r u)\|^p_p+N\varepsilon^{-1}r^p\|\zeta'_r u\|^p_p\right) \; r^{-p-\theta+d-1}dr\\
&& +N \|M^{-1}u\|^p_{p,\theta}
\end{eqnarray*}
and, expressing $D^2(\zeta'_r u)$ in terms of $\zeta'''_r u$, $\zeta''_rD_ju$, $\zeta'_rD_{ij}u$ and using all the characterizations prepared in step 1, we obtain  \eqref{eq210608_1}. In this inequality all the terms are finite and we can move $\varepsilon\|Du\|^p_{p,\theta}$ to the left side of \eqref{eq_almost} with sufficiently small $\varepsilon$. Doing so, we have \eqref{eq0129_6}.

The assertions when $u \in \bW_p^{1,2}\left((-\infty,T) \times \bR^d_+\right)$  and  $u(t,0,x') = 0$ follow from the same lines of the proof  once we confirm $\|M^{-1}u\|_{p,\theta} < \infty$.
To check this, we first note that
$$
\|M^{-1}u\|_{p, \theta} \leq \|M^{-1}u I_{x_1 \in (0,1)}\|_{p,\theta} + \|u I_{x_1 \geq 1}\|_p.
$$
Since we have zero Dirichlet condition and the condition  $d-1<\theta < d -1 + p$,  Hardy's inequality can be applied twice with the two sides of the restriction of $\theta$ and we have
$$
\|M^{-1}u I_{x_1 \in (0,1)}\|_{p,\theta}
\leq N\|Du I_{x_1\in(0,1)}\|_{p,\theta}\le N\|MD^2u I_{x_1\in(0,1)}\|_{p,\theta},\; N=N(d,d_1,p,\theta)
$$
if $\|MD^2u I_{x_1\in(0,1)}\|_{p,\theta}<\infty$ and this gives $\|M^{-1}u\|_{p,\theta} < \infty$. The finiteness of $\|MD^2u I_{x_1\in(0,1)}\|_{p,\theta}$ follows from the inequality
$$
\|MD^2u I_{x_1\in(0,1)}\|_{p,\theta}\leq N\|D^2u\|_{p,\theta}
$$
and the condition  $u \in \bW_p^{1,2}\left((-\infty,T) \times \bR^d_+\right)$.
The lemma is proved.
\end{proof}

Comparing the a priori estimate \eqref{eq0129_6} with our aim, Theorem \ref{thm_main_1},  we are about to remove the term $\|M^{-1}u\|_{p,\theta}$  from the estimate \eqref{eq0129_6}.  This job is quite involved and in fact the rest of our paper works on this.  Recall that in this section we assume that the matrices $A^{ij}$s depend only on $t$ and have zero oscillation  with respect to the space variables. Under such conditions,  the job can be done relatively easily when $p=2$ (makes everything beautiful), $\theta=d$ (gives $x_1^{\theta-d}=1 $, no weight). The following proposition and the theorem work on this. 
\begin{proposition}
							\label{prop0925_1}
Let $T \in (-\infty, \infty]$, $\lambda \ge 0$. Assume that  $u \in C_0^{\infty}((-\infty,T] \times \bR^d_+)$ if $T<\infty$ and $u \in C_0^{\infty}((-\infty,\infty) \times \bR^d_+)$ if $T=\infty$. Denote  
\begin{equation}
							\label{eq0919_1}
f:=- u_t + A^{ij}(t) D_{ij} u - \lambda u.
\end{equation}
Then  $Mf$ belongs to $L_2((-\infty,T)\times \bR^d_+)=\bL_{2,d}((-\infty,T)\times \bR^d_+)$ and we have the estimate
\begin{equation}
\|M^{-1}u\|_{2}\le N \|Mf\|_{2}\label{eq0925_1}
\end{equation}
where $N = N(\delta)$ and  $\|\cdot\|_{2}=\|\cdot\|_{L_{2}((-\infty,T)\times \bR^d_+)}$.
\end{proposition}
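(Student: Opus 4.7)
My plan is a weighted energy argument: test the equation against $\bar u$, translate the Legendre--Hadamard ellipticity into $L_2$-coercivity via Plancherel, and close using Hardy's inequality.

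First, $Mf \in L_2$ is automatic: since $u \in C_0^\infty((-\infty,T]\times \bR^d_+)$, its support is compact in $\bR^d_+$ and hence stays a positive distance from $\{x_1 = 0\}$; combined with the bound \eqref{eq_intro_3} on $a^{ij}_{kr}$, this makes $f$ bounded with compact support. Next I would multiply \eqref{eq0919_1} by $\bar u$, integrate over $(-\infty,T)\times\bR^d_+$, and take the real part. The time-derivative term contributes
$$
-\Re\int_{-\infty}^T\!\!\int_{\bR^d_+} u_t\bar u\,dx\,dt = -\tfrac12\int_{\bR^d_+}|u(T,x)|^2\,dx \leq 0,
$$
which is automatically zero when $T=\infty$. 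For the spatial term I extend $u$ by zero to $\bR^d$ (legitimate since $u$ vanishes near $\{x_1=0\}$) and integrate by parts to reach $-\Re\int a^{ij}_{kr}(t) D_j u^r\, D_i\bar u^k\,dx$. Applying Plancherel in $x$ and the Legendre--Hadamard condition \eqref{eq_intro_1} pointwise in $(t,\xi)$ with the choice $\theta^k = \overline{\hat u^k(t,\xi)}$ gives
$$
\Re\int_{\bR^d} a^{ij}_{kr}(t)\xi_i\xi_j\hat u^r\overline{\hat u^k}\,d\xi \geq \delta \int_{\bR^d}|\xi|^2|\hat u|^2\,d\xi = \delta\|Du(t,\cdot)\|_{L_2}^2.
$$
Integrating in $t$ and collecting terms yields the coercivity estimate
$$
\delta\|Du\|_2^2 + \lambda\|u\|_2^2 + \tfrac12\int_{\bR^d_+}|u(T,x)|^2\,dx \leq -\Re\int\!\!\int f\bar u\,dx\,dt.
$$

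To close, I would rewrite the right-hand side as $\int\!\!\int(x_1f)(x_1^{-1}\bar u)\,dx\,dt$ and apply Cauchy--Schwarz, bounding it by $\|Mf\|_2\|M^{-1}u\|_2$. The classical one-dimensional Hardy inequality applied in the $x_1$ variable at each fixed $(t,x')$ (valid since $u$ vanishes near $x_1=0$) yields $\|M^{-1}u\|_2 \leq 2\|D_1u\|_2 \leq 2\|Du\|_2$. Combining,
$$
\|M^{-1}u\|_2^2 \leq 4\|Du\|_2^2 \leq \tfrac{4}{\delta}\|Mf\|_2\|M^{-1}u\|_2,
$$
and since $\|M^{-1}u\|_2$ is finite by the compact support of $u$, dividing through gives \eqref{eq0925_1} with $N = 4/\delta$.

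The one conceptually nontrivial step is the Plancherel reduction: Legendre--Hadamard is a rank-one coercivity condition, so the bilinear form $\int a^{ij}_{kr}D_j u^r D_i\bar u^k\,dx$ is \emph{not} pointwise coercive in the full matrix $(D_iu^k)$; coercivity holds only after Fourier transform, because the Fourier-side tensor $i\xi_j\hat u^r$ factors as a rank-one product, which is exactly what Legendre--Hadamard controls. Everything else is a bookkeeping exercise, and the argument is quite clean at the $L_2$ level---which is why the result is stated only for $p=2$.
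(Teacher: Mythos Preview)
Your proof is correct and follows essentially the same route as the paper: multiply the equation by $\bar u$, integrate, use Plancherel plus Legendre--Hadamard to extract $\delta\|Du\|_2^2$, and close with Hardy's inequality $\|M^{-1}u\|_2 \le 2\|D_1 u\|_2$. The only cosmetic difference is at the very end, where the paper splits $|\bar u^{\text{tr}}f|$ via Young's inequality with an $\varepsilon$ to be chosen, while you apply Cauchy--Schwarz directly and divide; your version has the mild bonus of producing the explicit constant $N=4/\delta$.
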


\begin{proof}
1. Let $T<\infty$. Denote $\bar{u}=[\bar{u}^1\cdots \bar{u}^{d_1}]^{\text{tr}}$, where $\bar{u}^k$ is the complex conjugate function of $u^k$.
Performing a left multiplication of $1\times d_1$ matrix-valued function  $- \bar{u}^{\text{tr}}$ on both sides of  \eqref{eq0919_1} and integrating them over $(-\infty, T)\times \bR^d_+$, we have
\begin{equation}
							\label{eq0105_01}
\int^T_{-\infty}\int_{ \bR^d_+} \left(\bar{u}^{\text{tr}} u_t -  \bar{u}^{\text{tr}} A^{ij}(t) D_{ij} u + \lambda \bar{u}^{\text{tr}} u \right) \, dx \, dt = -\int^T_{-\infty}\int_{ \bR^d_+} \bar{u}^{\text{tr}} f \, dx \, dt.
\end{equation}
We will take the real part of \eqref{eq0105_01} and use  Legendre-Hadamard ellipticity condition \eqref{eq_intro_1} that we imposed.  For this we first note that
$$
\int^T_{-\infty}\int_{ \bR^d_+} \left( \overline{u_t}^{\text{tr}} u + \bar{u}^{\text{tr}} u_t \right) \, dx \, dt =\int^T_{-\infty}\int_{ \bR^d_+} \left( \bar{u}^{\text{tr}} u \right)_t \, dx \, dt
=\int_{ \bR^d_+} |u|^2 (T,x) \, dx
$$
by the fundamental theorem of calculus and hence we have
$$
\Re \left(\int^T_{-\infty}\int_{ \bR^d_+} \bar{u}^{\text{tr}} u_t \, dx \,dt\right) = \frac{1}{2} \int_{\bR^d_+} |u|^2 (T,x) \, dx.
$$
Next, as $u \in C_0^{\infty}((-\infty,T] \times \bR^d_+)$, by integration by parts and the fact that $a^{ij}_{kr}$s depend only on $t$ the integral
\begin{eqnarray}
-\int^T_{-\infty}\int_{ \bR^d_+} \bar{u}^{\text{tr}} A^{ij}(t) D_{ij} u \, dx \, dt
= - \sum_{i,j=1}^d \sum_{k,r=1}^{d_1} \int^T_{-\infty}\int_{\bR^d_+} \overline{u^k} a^{ij}_{kr}(t) D_{ij} u^r \, dx \, dt \nonumber
\end{eqnarray}
becomes
\begin{eqnarray}
\sum_{i,j=1}^d\sum_{k,r=1}^{d_1} \int^T_{-\infty}\int_{\bR^d_+} \overline{D_i u^k}  a_{kr}^{ij}(t) D_j u^r \, dx \, dt= \int^T_{-\infty}\int_{ \bR^d_+} \overline{D_i u}^{\text{tr}} A^{ij}(t) D_j u \, dx \, dt.\nonumber
\end{eqnarray}
Again, since $u \in C_0^\infty\left( (-\infty,T] \times \bR^d_+\right)$,  if we extend $u$ to be zero in the domain $(-\infty,T) \times\{x = (x_1,x') \in \bR^d: x_1 \le  0\}$, then  the extension of $u$, still denoted by $u$, belongs to $C_0^\infty\left( (-\infty,T] \times \bR^d\right)$.  Now, Plancherel's formula,  the condition \eqref{eq_intro_1},  and  Parseval's identity give
 \begin{eqnarray}
\Re\left(\int^T_{-\infty}\int_{\bR^d} \overline{D_i u}^{\text{tr}} A^{ij}(t) D_j u \, dx \, dt\right)
&=&\Re\left(\sum^d_{i,j=1}\int^T_{-\infty}\int_{\bR^d}\overline{\widetilde{D_iu}}^{\text{tr}}    A^{ij}(t) \widetilde{D_ju} \, d\xi \, dt\right)\nonumber\\
&=&\Re\left(\sum^d_{i,j=1}\int^T_{-\infty}\int_{\bR^d}\bar{\tilde{u}}^{\text{tr}}   \xi_i \xi_jA^{ij}(t) \tilde{u} \, d\xi \, dt\right)\nonumber
\\
&\ge&  \delta \int^T_{-\infty}\int_{\bR^d} |\xi|^2 |\tilde{u}|^2 \, d\xi \, dt \nonumber\\
&=& \delta  \int^T_{-\infty}\int_{\bR^d} |D u|^2 \, dx \, dt
= \delta  \int^T_{-\infty}\int_{\bR^d_+} |D u|^2 \, dx \, dt.\nonumber
\end{eqnarray}

Considering the real parts of \eqref{eq0105_01},  we get
\begin{eqnarray}
&&\delta\int^T_{-\infty}\int_{\bR^d_+} |Du|^2 \, dx \, dt\nonumber\\
&\le& \frac{1}{2}\int_{\bR_+^d} |u|^2(T,x) \, dx + \delta\int^T_{-\infty}\int_{\bR^d_+} |Du|^2 \, dx \, dt + \lambda \int^T_{-\infty}\int_{\bR^d_+} |u|^2 \, dx \, dt \nonumber\\
&\le& \Re \left( -\int^T_{-\infty}\int_{\bR^d_+} \bar{u}^{\text{tr}} f \, dx \, dt \right)\nonumber\\
&\le& \frac{\varepsilon}{2}\int^T_{-\infty}\int_{\bR^d_+} | x_1^{-1}u|^2 \, dx \, dt + \frac{1}{2\varepsilon}\int^T_{-\infty}\int_{\bR^d_+} | x_1 f|^2 \, \, dx \, dt \label{eq_time_3}
\end{eqnarray}
for any $\varepsilon > 0$, where  the last inequality follows from
$$
2\left| \bar{u}^{\text{tr}}(t,x) f(t,x) \right| \le \varepsilon |x_1^{-1}u(t,x)|^2 \,  + \frac{1}{\varepsilon} | x_1f(t,x)|^2 \,.
$$
Furthermore,  we note that Hardy's inequality tells
$$
\int^T_{-\infty}\int_{\bR^d_+} |x_1^{-1}u|^2  \, dx \, dt
\le 2^2\int^T_{-\infty}\int_{\bR^d_+}  |D_1 u|^2 \, dx \, dt.
$$
 Hence, \eqref{eq_time_3} and an appropriate choice of $\varepsilon>0$ depending only on $\delta$ lead to \eqref{eq0925_1}.

2. When $T=\infty$,  we have
 $
\Re \left(\int^{\infty}_{-\infty}\int_{ \bR^d_+} \bar{u}^{\text{tr}} u_t \, dx \,dt\right) =0.
$
The rest is the same as step 1 with $T$ replaced by $\infty$.
\end{proof}

Proposition \ref{prop0925_1}  crucially supports the following theorem.

\begin{theorem}[Weighted $L_2$-theory with $\theta=d$ on a half space]
							\label{thm1011}
Let $T \in (-\infty, \infty]$, $\lambda \ge 0$. Then for any $u \in \fH_{2,d}^2((-\infty,T)\times\bR^d_+)$  satisfying the system
\begin{equation}
							\label{eq1002_1}
- u_t + A^{ij}(t) D_{ij} u - \lambda u = f
\end{equation}
in $(-\infty,T) \times \bR^d_+$, where $Mf \in \bL_{2,d}((-\infty,T)\times \bR^d_+)$, we have  
\begin{eqnarray}
							\label{eq1002_2}
 &&\lambda \| M u \|_{2,d}
+ \sqrt\lambda \| MDu \|_{2,d}+\|u\|_{\fH^2_{2,d}}
\,\,\le \,\, N \|M f\|_{2,d},
\end{eqnarray}
 where  $N$ depends only on  $d,d_1,\delta$,  $\| \cdot \|_{2,d} = \| \cdot \|_{\bL_{2,d}((-\infty,T) \times \bR^d_+)}= \| \cdot \|_{L_{2}((-\infty,T) \times \bR^d_+)}=\|\cdot\|_2$, and $\|u\|_{\fH^2_{2,d}}=\|u\|_{\fH^2_{2,d}((-\infty,T) \times \bR^d_+))}$. Moreover, for any $f$ satisfying $M f \in \bL_{2,d}((-\infty,T) \times \bR^d_+)$, there exists a unique solution $u \in \fH_{2,d}^2((-\infty,T) \times \bR^d_+)$ to the system \eqref{eq1002_1}.
\end{theorem}

\begin{proof}

First we prove  the a prior estimate (\ref{eq1002_2}) given that $u \in \fH_{2,d}^2((-\infty,T) \times \bR^d_+)$ satisfies the system \eqref{eq1002_1}.
For the argument below we may assume that $\lambda>0$. Then, since
$$
\lambda u = -u_t + A^{ij}(t) D_{ij}u - f,
$$
we have $\lambda M u \in L_2\left((-\infty,T) \times \bR^d_+\right)$.
Then, by the denseness results (see Theorem 1.19 and Remark 5.5 in \cite{MR1708104}), $u$ can be approximated by functions $u_n$ in $C_0^\infty\left((-\infty,T] \times \bR^d_+\right)$ which satisfy two limits
$$
\|u - u_n\|_{\fH_{2,d}^2} \to 0 \quad \text{and} \quad \|Mu-Mu_n\|_{2,d} \to 0
$$
as $n \to \infty$.
Moreover, by the interpolation inequality \cite[Lemma 3.3]{MR3318165}, we have
$$
\sqrt{\lambda}\|MD(u-u_n)\|_2 \leq N \lambda \|M(u-u_n)\|_2 + N \|M D^2(u-u_n)\|_2  \to 0.
$$
Due to this observation, we may just assume $u\in C^{\infty}_0((-\infty,T]\times \bR^d_+)$ and therefore  we get the estimate (\ref{eq1002_2}) from Proposition \ref{prop0925_1} and Lemma \ref{lem half}.

Thanks to the method of continuity, to prove the second assertion of the theorem for the unique solvability, we only need  the solvability of  the system $-u_t + \Delta u - \lambda u = f$, where $ \Delta u= [\Delta u^1 \cdots \Delta u^{d_1}]^{\text{tr}}$ which in turn follows from the solvability of the single equation $-v_t+\Delta v - \lambda v=g$ with the scalar valued functions $v$ and $g$. This is proved  in Theorem 3.5 of  \cite{MR3318165}. The theorem is proved.
\end{proof}

\section{Mean oscillation estimates of the first derivatives}

In this section we extend Theorem \ref{thm1011} to the case $p>1$ and also prepare key elements for the next section. We note that Proposition \ref{prop0925_1} was crucial for Theorem \ref{thm1011} and used the big advantage of $p=2$ to estimate $Du$, which in turn dominates $M^{-1}u$ via Hardy's inequality.  As we consider all $p>1$, we can no longer enjoy this. Instead, we will estimate the \emph{mean oscillation} of $Du$ and then estimate $Du$ in the frame of  sharp function  and maximal function theory.

To deal with the mean oscillation of the first derivatives away from and near the boundary, we first pose Lemmas \ref{lemma_MO_1} and \ref{lem boundary}. These are similar to Lemmas 4.2 and 4.3 in \cite{MR3318165}, the single equation results, which are based on unweighted $L_p$-estimates along with the standard localization and Sobolev embeddings.
Since the corresponding results for systems are available, for instance, in \cite{MR2771670}, we  only give  brief proofs.  The proofs are in the same spirit of  those in \cite{MR3318165}. For them we use the abbreviations $Q_r=Q_r(0,(0,{\bf{0}}))$,  $Q_r^+=Q^+_r(0,(0,{\bf{0}}))$; see Section \ref{sec_prelim} for the definitions of cylinders $Q_r(t,x)$, $Q^+_r(t,x)$.

\begin{lemma}[Interior H\"{o}lder estimate of $Du$]\label{lemma_MO_1}
Let $\lambda \ge 0$, $1< p \le q <\infty$, and $u \in \bW_p^{1,2}(Q_2)$ satisfy the system
$$
-u_t + A^{ij}(t) D_{ij} u - \lambda u = 0
$$
in $Q_2$.
Then $u$ belongs to $\bW_q^{1,2}(Q_1)$ and there exists a constant $N = N(d,d_1, \delta, p, q)$ such that
\begin{equation}
							\label{eq0301_01}
\|u\|_{\bW_q^{1,2}(Q_1)} \le N \|u\|_{\bL_p(Q_2)}.
\end{equation}
Moreover,  for the case $q > d+2$ we have
\begin{equation}\label{eq20210619_3}
\|Du\|_{C^{\alpha/2,\alpha}(Q_1)} \le N \| \sqrt{\lambda}\,|u| + |Du|\,\|_{\bL_p(Q_2)},
\end{equation}
where $\alpha = 1 - (d+2)/q \in (0,1)$ and $N=N(d,d_1,\delta,p,q)$.
\end{lemma}
\begin{proof}
This lemma is a system version of  \cite[Lemma 4.2]{MR3318165} and we do not see any obstacle when we follow the proof of it, which owes  its ideas to \cite{MR2304157}.  So here we just sketch the proof. 

First, we obtain \eqref{eq0301_01} for the case $q=p$ by a well known localization argument based on a sequence of increasing domains from $Q_1$ to $Q_2$ and the applications of unweighted $L_p$-estimate for systems like the first part of Proposition \ref{pro entire}. Then \eqref{eq0301_01} for any $q>p$ follows from a standard bootstrap argument. The inequality \eqref{eq20210619_3} with $\lambda=0$ can be handled by considering $u(t,x)-\int_{Q_2}u$ and then applying \eqref{eq0301_01} and the Sobolev embedding theorem. For the case $\lambda>0$ we rely on S. Agmon's idea of raising one more space dimension to use \eqref{eq0301_01}. While doing so, we do not forget to check that the enahnced system is still under control of our Legendre-Hadamard ellipticity condition \eqref{eq_intro_1}.
\end{proof}

Note that in the estimate \eqref{eq0301_01} the constant $N$ is independent of $\lambda (\ge 0)$.

\begin{lemma}[Boundary H\"{o}lder estimate of $Du$]
                                      \label{lem boundary}
Let $\lambda \ge 0$, $1<p \le q < \infty$, and $u \in \fH_{p,d}^2(Q_2^+)$ satisfy the system
$$
-u_t + A^{ij}(t) D_{ij} u - \lambda u = 0
$$
in $Q_2^+$.
Then $u$ belongs to $\bW_q^{1,2}(Q_1^+)$ and in fact there exists a constant $N = N(d,d_1, \delta,p,q)$ such that
\begin{equation}\label{eq20210619_1}
\|u\|_{\bW_q^{1,2}(Q_1^+)} \le N \|u\|_{\bL_p(Q_2^+)}.
\end{equation}
In particular,  for the case $q > d+2$ we have
\begin{equation}\label{eq20210619_2}
\|Du\|_{C^{\alpha/2,\alpha}(Q_1^+)} \le N \|u\|_{\bL_p(Q_2^+)},
\end{equation}
where $\alpha = 1 - (d+2)/q \in (0,1)$ and $N=N(d,d_1,\delta,p,q)$.
\end{lemma}

\begin{proof}
1. To proceed as in the (sketch of the) proof Lemma \ref{lemma_MO_1},  we will first  show $u\in \bW_p^{1,2}(Q_{3/2}^+)$. 

As argued in the proof of  \cite[Lemma 4.3]{MR3318165}, we may assume that $\lambda > 0$.
Since $u \in \fH_{p,d}^2(Q_2^+)$, we have
$$
M^{-1}u, \, Du \in \bL_p(Q_2^+),
$$
which in particular implies that $u, Du \in \bL_p(Q_2^+)$.
Consider an infinitely differentiable function  $\eta=\eta(t,x)$ defined in $\bR \times \bR^d$ such that $
0 \leq \eta \leq 1$, $\eta = 1$ on 
$$
Q_{3/2} = (-(3/2)^2,0) \times (-3/2,3/2) \times B_{3/2}'({\textbf{0}}),
$$
and $\operatorname{supp} (\eta) \subset (-4,4) \times (-2,2) \times B_2'({\textbf{0}})$.
Then $d_1\times 1$ matrix-valued function $\eta u$ satisfies the system
$$
-(\eta u)_t + A^{ij}(t)D_{ij}(\eta u) - \lambda (\eta u) = g
$$
in $(-\infty,0) \times \bR^d_+$, where $g := -\eta_t u + D_{ij} \eta A^{ij}(t)\,u + A^{ij}(t)(D_i\eta D_j u+D_j\eta D_i u)$. We  can extend $\eta u$ to $(-\infty,0) \times \bR^d_+$ with value zero outside $Q_2^+$ and doing so $\eta u$ still vanishes on $\{x_1=0\}$ as $u$ does. On the other hand, we notice that
$$
g \in \bL_p\left((-\infty,0) \times \bR^d_+\right).
$$
Hence, by Proposition \ref{pro entire} there exists a unique $w \in \bW_p^{1,2}\left((-\infty,0) \times \bR^d_+\right)$ satisfying $w(t,0,x') = 0$ and
$$
- w_t + A^{ij}(t)D_{ij} w - \lambda w = g
$$
in $(-\infty,0) \times \bR^d_+$. Moreover, since the support of $\eta$ is bounded, we have $M g \in \bL_p\left((-\infty,0) \times \bR^d_+\right)$. Hence, from Lemma \ref{lem half} with $\theta=d$, it follows that $w \in \fH_{p,d}^2\left((-\infty,0) \times \bR^d_+\right)$.
Noticing $\eta u \in \fH_{p,d}^2\left((-\infty,0) \times \bR^d_+\right)$ as $u \in \fH_{p,d}^2\left(Q_2^+\right)$ and by the uniqueness result of Theorem \ref{thm1011}, we have $w = \eta u$.
This means $u\in \bW_p^{1,2}(Q_{3/2}^+)$. 

2. Now the rest of the proof is a routine. As we  explained in the proof of Lemma \ref{lemma_MO_1}, we first obtain \eqref{eq20210619_1} for the case $q=p$ with $Q^+_{3/2}$ in place of $Q^+_{2}$ by a  localization argument based on a sequence of increasing domains from $Q^+_1$ to $Q^+_{3/2}$ and the applications of unweighted $L_p$-estimate for systems like the second part of Proposition \ref{pro entire}. Then again \eqref{eq20210619_1} for any $q>p$ follows from a standard bootstrap argument. The inequality \eqref{eq20210619_2} is just a Sobolev embedding.
\end{proof}

For vector-valued functions $u$, denote
$$
\left(u\right)_Q
= :\dashint_Q u(t,x) \, dx \, dt=\frac{1}{|Q|}\int_Q u(t,x) \, dx \, dt,
$$
where $Q \subset \bR \times \bR^d$ and $|Q|$ denotes the volume, Lebesgue measure,  of $Q$.
 
Below we abbreviate $Q_{\kappa r}^+(0,(y_1,{\bf 0}))$ by $Q_{\kappa r}^+(y_1)$. 

\begin{lemma}\label{lem0106_1}
Let  $\kappa \ge 32$,   $y_1\ge 0$, $\lambda \ge 0$, and $r>0$.
Assume that  $M f $ belongs to $\bL_2\left(Q_{\kappa r}^+(y_1)\right)$ and
$u \in \fH_{2,d}^2 \left(Q_{\kappa r}^+(y_1)\right)$ is a solution to the system
$$
-u_t + A^{ij}(t) D_{ij} u - \lambda u = f
$$
in $Q_{\kappa r}^+(y_1)$. Then we have the estimate 
\begin{eqnarray}
 \nonumber
 \left( \left| Du - \left(Du\right)_{Q_r^+(y_1)}\right|^2\right)^{1/2}_{Q_r^+(y_1)} &\le& N \kappa^{-1/2} \left( \sqrt{\lambda} \left(|u|^2\right)^{1/2}_{Q_{\kappa r}^+(y_1)} + \left(|Du|^2\right)^{1/2}_{Q_{\kappa r}^+(y_1)}\right)\\
&&+ N \kappa^{(d+2)/2 } \left( |M f|^2 \right)^{1/2}_{Q_{\kappa r}^+(y_1)}, \label{eq0120_71}
\end{eqnarray}
where  $ \left| Du - \left(Du\right)_{Q_r^+(y_1)}\right|^2$ denotes $\sum_{i=1}^d \left| D_iu - \left(D_iu\right)_{Q_r^+(y_1)}\right|^2 $  and $N = N(d,d_1, \delta, q) > 0$; in particular, $N$ is indendent of $f$, $u$, $y_1$, $\lambda$, $r$. 
\end{lemma}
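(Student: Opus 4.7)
The plan is to decompose $u = v + w$, with $w$ absorbing the inhomogeneity $f$ and $v := u - w$ solving a homogeneous system, and then control the mean oscillation of $Du$ on $Q_r^+(y_1)$ by combining (a) $L_2$-bounds on $w$ and $Dw$ from Theorem \ref{thm1011}, and (b) H\"{o}lder regularity of $Dv$ from Lemma \ref{lemma_MO_1} (interior) or Lemma \ref{lem boundary} (boundary). First I would extend $f$ by zero outside $Q_{\kappa r}^+(y_1)$ and let $w \in \fH_{2,d}^2((-\infty,T)\times\bR^d_+)$, where $T$ is the upper time endpoint of $Q_{\kappa r}^+(y_1)$, be the unique solution produced by Theorem \ref{thm1011} of $-w_t + A^{ij}(t)D_{ij}w - \lambda w = f\,\mathbf{1}_{Q_{\kappa r}^+(y_1)}$. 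That theorem yields $\|Dw\|_{L_2}\le N\|Mf\|_{L_2(Q_{\kappa r}^+(y_1))}$, and the supplementary bound $\sqrt{\lambda}\,\|w\|_{L_2}\le N\|Mf\|_{L_2(Q_{\kappa r}^+(y_1))}$ follows by Cauchy--Schwarz through $\|w\|_{L_2}^2 \le \|Mw\|_{L_2}\|M^{-1}w\|_{L_2}$, using the bounds on $\lambda\|Mw\|_{L_2}$ and $\|M^{-1}w\|_{L_2}$ also furnished by Theorem \ref{thm1011}. Then $v = u - w$ solves the homogeneous system on $Q_{\kappa r}^+(y_1)$.

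The $Dw$-part of the oscillation is handled immediately: $(|Dw-(Dw)_Q|^2)^{1/2}_Q\le 2(|Dw|^2)^{1/2}_Q$ together with the uniform lower bound $|Q_r^+(y_1)|\ge cr^{d+2}$ (valid for all $y_1\ge 0$) and the $Dw$-bound above gives
\[
\bigl(\,|Dw-(Dw)_{Q_r^+(y_1)}|^2\,\bigr)^{1/2}_{Q_r^+(y_1)} \le N\kappa^{(d+2)/2}(|Mf|^2)^{1/2}_{Q_{\kappa r}^+(y_1)},
\]
which is the second term on the right of \eqref{eq0120_71}. For the $Dv$-part, fix once and for all $q \ge 2(d+2)$ in Lemmas \ref{lemma_MO_1} and \ref{lem boundary} so that $\alpha := 1-(d+2)/q \ge 1/2$, and split into two cases. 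In the \emph{interior case} $y_1\ge \kappa r/16$, set $\rho = \kappa r/16$; then $Q_\rho(y_1)\subset Q_{\kappa r}^+(y_1)\cap\bR^d_+$ and $Q_r(y_1)\subset Q_{\rho/2}(y_1)$ because $\kappa\ge 32$. A rescaled form of Lemma \ref{lemma_MO_1} applied to $v$ gives
\[
[Dv]_{C^{\alpha/2,\alpha}(Q_{\rho/2}(y_1))}\le N\rho^{-1-\alpha-d/2}\bigl(\sqrt{\lambda}\,\|v\|_{L_2(Q_\rho(y_1))}+\|Dv\|_{L_2(Q_\rho(y_1))}\bigr);
\]
multiplying by $r^\alpha$ to pass to a mean oscillation and converting $L_2$-norms to averages (which brings in $\rho^{(d+2)/2}$) collapses the scaling factor to $r^\alpha\rho^{-\alpha} = N\kappa^{-\alpha}$. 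Substituting $v=u-w$ and using $|Q_{\kappa r}^+(y_1)|\le N|Q_\rho(y_1)|$ along with the $w$-estimates produces the bound $N\kappa^{-\alpha}\bigl(\sqrt{\lambda}(|u|^2)^{1/2}+(|Du|^2)^{1/2}\bigr)_{Q_{\kappa r}^+(y_1)}+N\kappa^{-\alpha}(|Mf|^2)^{1/2}_{Q_{\kappa r}^+(y_1)}$.

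In the \emph{boundary case} $y_1<\kappa r/16$, the inclusions $Q_{\kappa r/2}^+(0)\subset Q_{\kappa r}^+(y_1)$ and $Q_r^+(y_1)\subset Q_{\kappa r/4}^+(0)$ both hold, again thanks to $\kappa\ge 32$. Since $M^{-1}u\in L_2$ forces $u|_{x_1=0}=0$ in the trace sense and $w$ is of Dirichlet type, $v$ vanishes at $\{x_1=0\}$ and belongs to $\fH_{2,d}^2(Q_{\kappa r/2}^+(0))$. Rescaling Lemma \ref{lem boundary} to $Q_{\kappa r/2}^+(0)$ gives
\[
[Dv]_{C^{\alpha/2,\alpha}(Q_{\kappa r/4}^+(0))}\le N(\kappa r)^{-1-\alpha-(d+2)/2}\|v\|_{L_2(Q_{\kappa r/2}^+(0))};
\]
then Hardy's inequality in $x_1\in(0,\kappa r/2)$ replaces $\|v\|_{L_2(Q_{\kappa r/2}^+(0))}$ by $N\kappa r\,\|Dv\|_{L_2(Q_{\kappa r/2}^+(0))}$, after which multiplying by $r^\alpha$ and passing to averages again produces the factor $\kappa^{-\alpha}$. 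Splitting $Dv = Du - Dw$ yields the $(|Du|^2)^{1/2}_{Q_{\kappa r}^+(y_1)}$ and $(|Mf|^2)^{1/2}_{Q_{\kappa r}^+(y_1)}$ contributions. Combining both cases with the $Dw$-estimate, using $\kappa^{-\alpha}\le\kappa^{-1/2}$ (as $\kappa\ge 32$ and $\alpha\ge 1/2$), and adding the non-negative term $\sqrt{\lambda}(|u|^2)^{1/2}_{Q_{\kappa r}^+(y_1)}$ in the boundary case yields \eqref{eq0120_71}. The principal obstacle will be the boundary case: Lemma \ref{lem boundary} controls $[Dv]_{C^{\alpha/2,\alpha}}$ only by $\|v\|_{L_2}$ rather than by $\sqrt{\lambda}\|v\|_{L_2}+\|Dv\|_{L_2}$ as in the interior Lemma \ref{lemma_MO_1}, and recovering the $(|Du|^2)^{1/2}$ on the right of \eqref{eq0120_71} hinges on trading $\|v\|_{L_2}$ for $\|Dv\|_{L_2}$ via Hardy's inequality, which is available precisely because $v$ vanishes at the flat boundary.
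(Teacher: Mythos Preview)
Your proposal is correct and follows essentially the same approach as the paper: decompose $u=v+w$ with $w$ obtained from Theorem~\ref{thm1011} and $v$ homogeneous, then combine the global $L_2$-bounds for $w$ with the interior/boundary H\"older estimates (Lemmas~\ref{lemma_MO_1} and~\ref{lem boundary}) for $Dv$, invoking Hardy's inequality in the boundary case. The only differences are organizational---the paper first scales to $\kappa r=8$ and localizes $f$ to an intermediate cylinder ($Q_4^+$ or $Q_{1/2}(y_1)$) rather than to all of $Q_{\kappa r}^+(y_1)$, and splits cases at $y_1=1$ rather than $y_1=\kappa r/16$---but these are cosmetic and the substance of the argument is the same.
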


\begin{proof}
1. By noticing  $M u \in \bL_2\left(Q_{\kappa r}^+(y_1)\right)$ and considering $-u_t + A^{ij}(t) D_{ij} u - \varepsilon u = f - \varepsilon u$ and letting $\varepsilon \searrow 0$, we can only  consider the case $\lambda > 0$.

 Moreover, we  only need to  prove the result for the special case  $r=\frac{8}{\kappa}$ ($\kappa r = 8$). 
In fact,  let $y_1\ge 0$, $\lambda>0$, $r>0$ be any  numbers. Then for any $f$, $u$ defined  on  $Q^+_{\kappa r}(y_1)$ and satisfying the given assumptions, we define 
$$
v(t,x)=u(\beta^2 t, \beta x), 
\quad g(t,x)=\beta^2 f(\beta^2 t, \beta x),
$$
where  $\beta:=\frac{\kappa r}{8}$. Then  $v,g$ are functions defined on $Q_{8}^+(y_1/\beta)$, $Mv$  is in $\bL_2\left(Q_{8}^+(y_1/\beta)\right)$, and $u$ is in  $\fH_{2,d}^2 \left(Q_{8}^+(y_1/\beta)\right)$. Moreover, $v$ is a solution to the system
\begin{equation*}
-v_t + A^{ij}(\beta^2t) D_{ij} v - \lambda\beta^2 v = g\label{eq_MO_2}
\end{equation*}
in $Q_{8}^+(y_1/\beta)$. Hence, if the lemma holds when $kr=8$, then  we have (\ref{eq0120_71}) with $v,g,y_1/\beta,\lambda\beta^2,r=8/\kappa$ in place of $u,f,y_1,\lambda,r$, respectively. On the other hand, a straightforward computation shows that
$$
\left( |M g|^2 \right)^{1/2}_{Q_{8}^+(y_1/\beta)}=\beta\left( |M f|^2 \right)^{1/2}_{Q_{\kappa r}^+(y_1)},\quad
\left(|Dv|^2\right)^{1/2}_{Q_{8}^+(y_1/\beta)}=\beta\left(|Du|^2\right)^{1/2}_{Q_{\kappa r}^+(y_1)},
$$
$$
\sqrt{\lambda\beta^2} \left(|v|^2\right)^{1/2}_{Q_{8}^+(y_1/\beta)} = \beta\sqrt{\lambda}\left(|u|^2\right)^{1/2}_{Q_{\kappa r}^+(y_1)},
$$
$$
 \left( \left| Dv - \left(Dv\right)_{Q_{8/\kappa}^+(y_1/\beta)}\right|^2\right)^{1/2}_{Q_{8/\kappa}^+(y_1/\beta)} =\beta \left( \left| Du - \left(Du\right)_{Q_r^+(y_1)}\right|^2\right)^{1/2}_{Q_r^+(y_1)},
$$
and we obtain (\ref{eq0120_71}) for general $r>0$.
Thus, the result of this lemma for the special $r=\frac{8}{\kappa}$ implies the result for  general $r>0$.  

2. Let us first consider the case $y_1 \in [0,1]$.
Since we assume $r = 8/\kappa \le 1/4$, we will keep the following in our mind:
\begin{multline*}
Q_r^+(y_1)=(-r^2,0)\times((y_1-r)\vee 0,y_1+r)\times B'_r({\textbf{0}})\\ \subset Q_2^+ \subset Q_4^+ \subset Q_{\kappa r}^+(y_1)=Q^+_8(y_1)
\end{multline*}
as $(0,y_1+r)\subset (0,2)$ and $(0,4)\subset(0,y_1+8)$, where $a\vee b:=\max\{a,b\}$. 
We note $MfI_{Q^+_4}\in \bL_{2,d}((-\infty,0)\times \bR^d_+)=\bL_2((-\infty,0)\times \bR^d_+)$. Hence, by Theorem \ref{thm1011}, there is a unique $w \in \fH_{2,d}^2((-\infty,0) \times \bR^d_+)$ satisfying the system
$$
-w_t + A^{ij}(t) D_{ij} w - \lambda w = f \textbf{1}_{Q_4^+}
$$
in $(-\infty,0) \times \bR^d_+$, where $\textbf{1}_{Q}$ denotes the indicator function on $Q$ and, in particular, we have
\begin{equation}
      \label{eq0120_5}
\|Dw\|_{\bL_{2,d}((-\infty,0) \times \bR^d_+)} \le N\|M f I_{Q_4^+} \|_{\bL_{2,d}((-\infty,0) \times \bR^d_+)} = N \|M f\|_{\bL_2(Q_4^+)},
\end{equation}
where $N=N(d,d_1,\delta)$. Then $v := u-w$ is in $\fH_{2,d}^2(Q_4^+)$ and satisfies
$$
-v_t+A^{ij}(t)D_{ij}v-\lambda v=0, \quad (t,x)\in Q^+_4.
$$
To obtain \eqref{eq0120_71} we utilize $w$ and $v$. 

By definitions we note that 
$$
\left( \left| Dw - \left(Dw\right)_{Q_r^+(y_1)}\right|^2\right)^{1/2}_{Q_r^+(y_1)}
  \leq
 2 \left( \left| Dw \right|^2\right)^{1/2}_{Q_r^+(y_1)}
$$
and for any $\alpha\in (0,1)$
\begin{eqnarray}
\left( \left| Dv - \left(Dv\right)_{Q_r^+(y_1)}\right|^2\right)^{1/2}_{Q_r^+(y_1)}
  \leq
 N r^{\alpha}[Dv]_{C^{\alpha/2,\alpha}(Q^+_2)},\label{eq_MO_1}
\end{eqnarray}
where $N=N(\alpha)$. On the other hand, by Lemma \ref{lem boundary} with $p=2$, $q$ satisfying $1-(d+2)/q = 1/2$, and a scaling argument as in step 1, we have
\begin{eqnarray}
[Dv]_{C^{1/4,1/2}(Q^+_2)}\leq N \|v\|_{\bL_2(Q^+_4)}
\leq N \|M^{-1}v\|_{\bL_{2}(Q^+_4)}\leq N \|Dv\|_{\bL_{2}(Q^+_4)}, \label{eq0120_4}
\end{eqnarray}
where the last inequality is due to Hardy's inequality and the last $N$ depends only on $d,d_1,q$. Combining (\ref{eq_MO_1}), (\ref{eq0120_4}), and  (\ref{eq0120_5}), we have
\begin{eqnarray*}
 && \left( \left| Du - \left(Du\right)_{Q_r^+(y_1)}\right|^2\right)^{1/2}_{Q_r^+(y_1)}\\
 &\leq& N \left( \left| Dv - \left(Dv\right)_{Q_r^+(y_1)}\right|^2\right)^{1/2}_{Q_r^+(y_1)}+
 N \left( \left| Dw \right|^2\right)^{1/2}_{Q_r^+(y_1)} \\
 &\leq& N r^{1/2} \left( \left| Dv \right|^2\right)^{1/2}_{Q_4^+} +N r^{-(d+2)/2} \left(\left|Mf \right|^2\right)^{1/2}_{Q_4^+}\\
&\leq& N r^{1/2} \left( \left| Du \right|^2\right)^{1/2}_{Q_4^+} +N r^{-(d+2)/2} \left(\left|Mf \right|^2\right)^{1/2}_{Q_4^+},
 \end{eqnarray*}
where $N=N(d,d_1,q)$.
 Since $\kappa r=8$ and $Q_4^+ \subset Q^+_{\kappa r}(y_1)=Q^+_{8}(y_1)$, we obtain (\ref{eq0120_71}).

 3. Let $y_1\in(1,\infty)$.  We again assume $r=8/{\kappa}\leq 1/4$. Due to $y_1>1$,   this time we have
 $$
 Q_r^+(y_1)=Q_r(y_1)\subset Q_{1/4}(y_1)\subset Q_{1/2}(y_1)\subset Q_{\kappa r}^+(y_1).
 $$
 As in  step 2, by Theorem \ref{thm1011} there is a unique solution $w\in \fH^2_{2,d}((-\infty,0)\times \bR^d_+)$ to the system
 $$
 -w_t+A^{ij}(t)D_{ij}w-\lambda w=f \textbf{1}_{Q_{1/2}(y_1)}
 $$
 and the estimate  (\ref{eq1002_2}) holds with $w$ and $f \textbf{1}_{Q_{1/2}(y_1)}$ in place of $u$ and $f$, respectively.  In particular, we have
\begin{multline*}
\lambda\|Mw\|_{\bL_{2,d}((-\infty,0)\times \bR^d_+)}+\|M^{-1}w\|_{\bL_{2,d}((-\infty,0)\times \bR^d_+)}+\|Dw\|_{\bL_{2,d}((-\infty,0)\times \bR^d_+)}
\\
\leq N \|Mf\|_{\bL_2(Q_{1/2}(y_1))},
\end{multline*}
where  $N=N(d,d_1,\delta)$. This estimate along with the inequality
 $$
 \sqrt{\lambda} \leq \lambda x_1+x_1^{-1},\quad x_1>0
 $$
 shows that
 \begin{equation}
          \label{eq0121_1}
           \|\sqrt{\lambda}|w|+|Dw|\|_{\bL_{2,d}((-\infty,0)\times \bR^d_+)}\leq N \|Mf\|_{\bL_2(Q_{1/2}(y_1))},
 \end{equation}
where $N=N(d,d_1,\delta)$. Then $v:=u-w\in \fH^2_{2,d}((-\infty,0)\times \bR^d_+)$ and satisfies
 $$
 -v_t+A^{ij}(t)D_{ij}v-\lambda v=0, \quad (t,x)\in Q_{1/2}(y_1).
 $$
 Applying Lemma \ref{lemma_MO_1} with $p=2$, a large $q$ satisfying $1-(d+2)/q = 1/2$, and scaling/translation arguments, we get
 \begin{eqnarray*}
 \nonumber
\left( \left| Dv - \left(Dv\right)_{Q_r^+(y_1)}\right|^2\right)^{1/2}_{Q_r^+(y_1)}
  &\leq&
 N r^{1/2}[Dv]_{C^{1/4,1/2}(Q_{1/4}(y_1))}\\
&\leq& N r^{1/2} \left((\sqrt{\lambda}|v|+|Dv|)^2\right)^{1/2}_{Q_{1/2}(y_1)},
\end{eqnarray*}
where $N=N(d,d_1,q)$. As in the last part of  step 2, we then have
\begin{eqnarray*}
 && \left( \left| Du - \left(Du\right)_{Q_r^+(y_1)}\right|^2\right)^{1/2}_{Q_r^+(y_1)}\\
 &\leq& N \left( \left| Dv - \left(Dv\right)_{Q_r^+(y_1)}\right|^2\right)^{1/2}_{Q_r^+(y_1)}+
 N \left( \left| Dw \right|^2\right)^{1/2}_{Q_r^+(y_1)} \\
 &\leq& N r^{1/2}  \left((\sqrt{\lambda}|v|+|Dv|)^2\right)^{1/2}_{Q_{1/2}(y_1)} +N r^{-(d+2)/2} \left(\left|Mf \right|^2\right)^{1/2}_{Q_{1/2}(y_1)}\\
&\leq& N r^{1/2}  \left((\sqrt{\lambda}|u|+|Du|)^2\right)^{1/2}_{Q_{1/2}(y_1)}+N r^{-(d+2)/2} \left(\left|Mf \right|^2\right)^{1/2}_{Q_{1/2}(y_1)},
 \end{eqnarray*}
where we used \eqref{eq0121_1} for the last inequality and  the last $N$ depends only on $d,d_1,q$.
 Since $\kappa r=8$ and $Q_{1/2}(y_1) \subset Q^+_{\kappa r}(y_1)=Q^+_{8}(y_1)$,   (\ref{eq0120_71}) follows again.
 \end{proof}

\begin{remark}
   \label{re0126_1}
For $T \in (-\infty,\infty]$, consider the collection of all parabolic cylinders in $(-\infty,T) \times \bR^{d}_+$:
$$
\cQ=\{Q^+ = Q^+_r(t,x): (t,x) \in (-\infty,T) \times \bR^{d}_+,\; r \in (0,\infty)\}.
$$
For given $p>1$ we call a scalar valued function $w$ on $(-\infty,T) \times \bR^d_+$ \emph{Muckenhoupt weight} or \emph{$A_p$ weight} and write $w \in A_p((-\infty,T) \times \bR^d_+)$ if $w$ is a non-negative function defined on $(-\infty,T) \times \bR^d_+$ and satisfies
$$
[w]_{A_p}:=
\sup\{(w)_{Q^+}\cdot (w^{-1/(p-1)})_{Q^+}^{p-1}:Q^+\in \cQ\}<\infty.
$$
As a trivial example, $w\equiv 1$ is a $A_p$ weight for any $p>1$ as $[w]_{A_p}=1$.
In this paper the following observation is very important and will be used in the next section:  if $p \in (1,\infty)$, $1< q < p$, and $\theta \in (d-1,d-1+p/q)$, then
$$
w=w(t,x)=x_1^{\theta-d} \in A_{p/q}((-\infty,\infty) \times \bR^{d}_+).
$$
Indeed, for any $t$, $x=(x_1,x') \in \bR^d_+$, and $r > 0$, we have
$$
(w)_{Q^+_r(t,x)}=\frac{1}{2r}\int_{(x_1-r)\vee 0}^{x_1+r} y_1^{\theta-d} \, dy_1.
$$
If $x_1 < 2r$, then the length of the interval $((x_1-r)\vee 0, \;\;x_1+r)$ is either $2r$ or $x_1+r$, and thus it is greater than  $r$. Hence, 
\begin{eqnarray*}
&&(w)_{Q^+_r(t,x)}\cdot (w^{-1/(p/q-1)})^{p/q-1}_{Q^+_r(t,x)}
\\
&=&\left(\dashint_{(x_1-r)\vee 0}^{x_1+r} y_1^{\theta-d} \, dy_1\right) \left( \dashint_{(x_1-r)\vee 0}^{x_1+r} \left(y_1^{\theta-d}\right)^{-1/(p/q-1)} \, dy_1\right)^{p/q-1}
\\
&\leq& \left(\frac{1}{r}\int_0^{x_1+r} y_1^{\theta-d} \, dy_1\right) \left( \frac{1}{r}\int_0^{x_1+r} \left(y_1^{\theta-d}\right)^{-1/(p/q-1)} \, dy_1\right)^{p/q-1}
\\
&\leq& \left( r^{\theta-d} \int_0^{\frac{x_1}{r} + 1} \tau^{\theta-d} \, d\tau \right)\left( r^{-\frac{\theta-d}{p/q-1}} \int_0^{\frac{x_1}{r}+1} \tau^{-\frac{\theta-d}{p/q-1}} \, d\tau \right)^{p/q-1}
\\
&\leq& \left( \int_0^3 \tau^{\theta-d} \, d\tau\right) \left(\int_0^3 \tau^{-\frac{\theta-d}{p/q-1}} \, d\tau \right)^{p/q-1},
\end{eqnarray*}
where the last quantity is finite since $\theta - d > -1$ and $-\frac{\theta-d}{p/q-1}> -1$.
If $x_1 \geq  2r$, then $(x_1-r)\vee 0=x_1-r$ and we have
$$
\left(\frac{1}{2r}\int_{x_1-r}^{x_1+r} y_1^{\theta-d} \, dy_1\right) \left( \frac{1}{2r}\int_{x_1-r}^{x_1+r} \left(y_1^{\theta-d}\right)^{-1/(p/q-1)} \, dy_1\right)^{p/q-1}
$$
$$
= 2^{-p/q} \left(\int_{\frac{x_1}{r}-1}^{\frac{x_1}{r}+1} \tau^{\theta-d} \, d\tau \right) \left( \int_{\frac{x_1}{r}-1}^{\frac{x_1}{r}+1} \tau^{-\frac{\theta-d}{p/q-1}} \, d\tau\right)^{p/q-1}
$$
$$
\leq 
\left\{
\begin{aligned}
2^{-p/q} 2 \left(\frac{x_1}{r}-1\right)^{\theta-d} \left( 2 \left(\frac{x_1}{r}+1\right)^{-\frac{\theta-d}{p/q-1}} \right)^{p/q-1}, \quad \text{if} \quad \theta -d \leq 0,
\\
2^{-p/q} 2 \left(\frac{x_1}{r}+1\right)^{\theta-d} \left( 2 \left(\frac{x_1}{r}-1\right)^{-\frac{\theta-d}{p/q-1}} \right)^{p/q-1}, \quad \text{if} \quad \theta - d > 0,
\end{aligned}
\right.
$$
where the last quantities are bounded by a constant independent of $x_1$ and $r$ since
$$
\left(\frac{x_1}{r}+1\right)\left(\frac{x_1}{r}-1\right)^{-1} \leq 3
$$
holds as long as $x_1 \ge 2r$.

Now, let us record two theorems which we will use in connection with $A_p$ weights we just mentioned. First, we consider the (scalar valued) maximal function of matrix-valued function $g$,
$$
\cM  g (t,x) := \sup\left\{ (|g|)_{Q^+}:Q^+\in\cQ\;\text{and }Q^+\text{contains }(t,x)  \right\}, \quad
(t,x) \in (-\infty,T) \times \bR^{d}_+.
$$
Then we have  the following Hardy-Littlewood maximal function theorem with $A_p$ weights (WHL):
$$
\| \cM g \|_{\bL_{p,w}} \leq N \|g\|_{\bL_{p,w}},
$$
where
$$
\|f\|_{\bL_{p,w}}^p = \int_{-\infty}^T \int_{\bR^d_+} |f(t,x)|^p w(t,x) \, dx \, dt
$$
and  $N=N(d,p,[w]_{A_p})$ i.e. independent of $g$.

We will also use the Fefferman-Stein theorem for sharp functions with $A_p$ weights (WFS). To state this theorem precisely, we define our sharp functions using a filtration we now describe. Consider the following series of partitions of $(-\infty,T) \times \bR^d_+$.
$$
\mathcal{P}_\ell : = \{Q^\ell = Q^\ell_{i_0,i_1,\ldots,i_d}: i_0, i_1,,\ldots, i_d \in \bZ, \, i_0 \leq 0, \, i_1 \ge 0\},
$$
where $\ell \in \bZ$ and $Q^\ell_{i_0,i_1,\ldots,i_d}$ is the intersection of $(-\infty,T) \times \bR^d_+$ with parabolic cubes
$$
[(i_0-1)2^{-2\ell}+T, i_0 2^{-2\ell}+T) \times [i_1 2^{-\ell}, (i_1+1)2^{-\ell}) \times \cdots \times [i_d 2^{-\ell}, (i_d+1)2^{-\ell}),
$$
when $T < \infty$.
If $T = \infty$, we replace $i_0 \leq 0$ by $i_0 \in \bZ$ and the time interval $[(i_0-1)2^{-2\ell} + T, i_0 2^{-2\ell}+T)$ by
$[(i_0-1)2^{-2\ell}, i_0 2^{-2\ell})$. As $\ell$ increases, $\mathcal{P}_{\ell}$ becomes finer. We call $\mathcal{P}:=\bigcup_{\ell\in\bZ}\mathcal{P}_{\ell}$ a filtration of $(-\infty,T) \times \bR^d_+$. 
Then, we define the (scalar valued) sharp function of matrix-valued function $g$,
\begin{multline*}
g^{\#}_{\operatorname{dy}}(t,x): = \sup\left\{(|g -
(g)_{Q^\ell}|)_{Q^{\ell}} :Q^\ell \in\mathcal{P}\text{ and }Q^\ell\text{contains }(t,x) \right\},\\ \; (t,x) \in (-\infty,T) \times \bR^d_+.
\end{multline*}
The Fefferman-Stein theorem for sharp functions with $A_p$ weights (see, for instance, \cite[Theorems 2.3 and 2.4]{MR3812104}) states that
$$
\| g \|_{\bL_{p,w}} \leq N \|g^{\#}_{\operatorname{dy}}\|_{\bL_{p,w}}
$$
for $w \in A_p\left((-\infty,T) \times \bR^d\right)$, 
where $N = N(d,p,[w]_p)$.

\end{remark}

The following theorem extends Theorem {\ref{thm1011}} and considers all $p>1$. 
\begin{theorem}
[Weighted $L_p$-theory with $\theta=d$ on a half space]
							\label{thm_MO_1}
Let $T \in (-\infty, \infty]$, $\lambda \ge 0$, and $p\in(1,\infty)$.  Then for any $u \in \fH_{p,d}^2((-\infty,T)\times\bR^d_+)$  satisfying the system
\begin{equation}
							\label{eq_MO_3}
- u_t + A^{ij}(t) D_{ij} u - \lambda u = f
\end{equation}
in $(-\infty,T) \times \bR^d_+$ with $Mf \in \bL_{p,d}((-\infty,T)\times \bR^d_+)$, we have 
\begin{eqnarray}
							\label{eq_MO_4}
\lambda \| M u \|_{p,d}
+ \sqrt\lambda \| MDu \|_{p,d}+\|u\|_{\fH^2_{p,d}}\leq N \|M f\|_{p,d},
\end{eqnarray}
where  $N$ depends only on  $d,d_1,\delta,p$, $\| \cdot \|_{p,d} = \| \cdot \|_{\bL_{p,d}((-\infty,T) \times \bR^d_+)}$, and $\|\cdot\|_{\fH^2_{p,\theta}}=\|\cdot\|_{\fH^2_{p,\theta}((-\infty,T)\times \bR^d_+)}$. Moreover, for any $f$ satisfying $M f \in \bL_{p,d}((-\infty,T) \times \bR^d_+)$, there exists a unique solution $u \in \fH_{p,d}^2((-\infty,T) \times \bR^d_+)$ to the system \eqref{eq_MO_3}.
\end{theorem}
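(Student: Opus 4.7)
The plan is to build on the $L_2$ theory of Theorem \ref{thm1011} by lifting the mean oscillation bound of Lemma \ref{lem0106_1} to an $L_p$ bound on $Du$, using the Fefferman--Stein and Hardy--Littlewood theorems laid out in Remark \ref{re0126_1}. For $\theta = d$ the weight is $w \equiv 1 \in A_p$ for every $p \in (1,\infty)$, so the $A_p$-weighted versions of those theorems are available. The proof breaks into (a) an a priori estimate \eqref{eq_MO_4} for $u \in C_0^\infty((-\infty,T] \times \bR^d_+)$, passed to general $u \in \fH_{p,d}^2$ by density (Theorem 1.19 and Remark 5.5 of \cite{MR1708104}); (b) for $p \in (1,2)$, a duality argument against the adjoint system; and (c) the method of continuity, anchored at the componentwise heat system, whose solvability is \cite[Theorem 3.5]{MR3318165}.

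For $p \ge 2$, given any dyadic cell $Q^\ell$ from the filtration $\bC_\ell$ of Remark \ref{re0126_1}, I associate a half-cylinder $Q_r^+(y_1) \supset Q^\ell$ with $|Q_r^+(y_1)| \le N|Q^\ell|$. Jensen's inequality bounds the $L_1$ oscillation on $Q^\ell$ by the $L_2$ oscillation on $Q_r^+(y_1)$, which in turn is controlled by Lemma \ref{lem0106_1}. Replacing averages on $Q_{\kappa r}^+(y_1)$ by the maximal function $\cM$ yields the pointwise bound
\[
(Du)^{\#}_{\operatorname{dy}} \le N\kappa^{-1/2}\bigl[\sqrt{\lambda}\,(\cM(|u|^2))^{1/2} + (\cM(|Du|^2))^{1/2}\bigr] + N\kappa^{(d+2)/2}(\cM(|Mf|^2))^{1/2}.
\]
Fefferman--Stein in $L_p$ combined with Hardy--Littlewood applied to $|g|^2$ in $L_{p/2}$ (valid since $p > 2$) then produces
\[
\|Du\|_{L_p} \le N\kappa^{-1/2}\bigl(\sqrt{\lambda}\|u\|_{L_p} + \|Du\|_{L_p}\bigr) + N\kappa^{(d+2)/2}\|Mf\|_{L_p}.
\]
Using the elementary bound $\sqrt{\lambda}|u| \le \lambda|Mu| + |M^{-1}u|$, Hardy's inequality $\|M^{-1}u\|_{L_p} \le N\|Du\|_{L_p}$ (applicable since $\theta = d \in (d-1,d-1+p)$), and Lemma \ref{lem half} to control $\lambda\|Mu\|_{L_p}$, the $\sqrt{\lambda}\|u\|_{L_p}$ term is absorbed into $N\|Du\|_{L_p} + N\|Mf\|_{L_p}$. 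Choosing $\kappa$ large absorbs $\|Du\|_{L_p}$ on the left, giving $\|Du\|_{L_p} \le N\|Mf\|_{L_p}$; a second invocation of Hardy and Lemma \ref{lem half} yields \eqref{eq_MO_4}.

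For $p \in (1,2)$, I would use duality. The formal adjoint of \eqref{eq_MO_3} is $\partial_t v + (A^{ij})^{*}(t)D_{ij}v - \bar{\lambda}v = g$ with zero Dirichlet and terminal-type conditions; the coefficients $(A^{ij})^{*}(t)$ satisfy the Legendre--Hadamard condition with the same $\delta$, so after time reversal the adjoint has the same structural form as \eqref{eq_MO_3}. The previous paragraph supplies its a priori estimate in $L_{p'}$ for $p' = p/(p-1) > 2$. Pairing $u$ with a solution $v$ of the adjoint whose source is built from an $L_{p'}$-normalized test function and integrating by parts, one recovers $\|Du\|_{L_p} \le N\|Mf\|_{L_p}$; Hardy and Lemma \ref{lem half} then deliver \eqref{eq_MO_4}. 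Existence and uniqueness in both regimes follow by the method of continuity once the a priori estimate is in hand.

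The main obstacle is precisely the $p < 2$ case: Lemma \ref{lem0106_1} is phrased in terms of $L_2$ averages on the right, which forces Hardy--Littlewood to be applied in $L_{p/2}$ and hence is only effective on the first pass for $p > 2$. The duality bridge works cleanly here precisely because the $x$-independence of $A^{ij}(t)$ means the adjoint belongs to the same class of systems.
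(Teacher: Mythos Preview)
Your proposal is correct and follows essentially the same route as the paper's proof: for $p>2$, the paper combines the $L_2$ mean-oscillation bound of Lemma~\ref{lem0106_1} with the dyadic Fefferman--Stein and Hardy--Littlewood theorems (with $w\equiv 1$, i.e.\ $\theta=d$) exactly as you describe, then closes using $\sqrt{\lambda}\le \lambda x_1+x_1^{-1}$, Hardy's inequality, and Lemma~\ref{lem half}; for $1<p<2$ the paper also argues by duality, solving the backward problem $v_t+A^{ij}(t)D_{ij}v-\lambda v=g$ in $\fH_{q,d}^2$ with $q=p'>2$ and pairing; the case $p=2$ is Theorem~\ref{thm1011}, and existence is obtained by the method of continuity from the heat system via \cite[Theorem~3.5]{MR3318165}. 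Your remark that the adjoint coefficients $(A^{ij})^{*}$ (or $(A^{ij})^{\mathrm{tr}}$) satisfy the same structural hypotheses is the correct way to make the duality rigorous for systems; the paper's write-up of this step is terser.
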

\begin{proof}
Due to the method of continuity and  the corresponding theory of the Laplacian case in  e.g. Theorem 3.5 in \cite{MR3318165},  we only prove the a priori estimate (\ref{eq_MO_4}).

1. Let $p>2$. Take any $\kappa\geq 32$.  Then using Lemma \ref{lem0106_1} with a simple translation argument, we have
\begin{eqnarray}
 \nonumber
 \left( \left| Du - \left(Du\right)_{Q^+_r(s,y)}\right|^2\right)^{1/2}_{Q^+_r(s,y)} 
&\le& N \kappa^{-1/2} \left( \sqrt{\lambda} \left(|u|^2\right)^{1/2}_{Q^+_{\kappa r(s,y)} }+ \left(|Du|^2\right)^{1/2}_{Q^+_{\kappa r(s,y)}}\right)\\
&&+ N \kappa^{(d+2)/2 } \left( |M f|^2 \right)^{1/2}_{Q^+_{\kappa r(s,y)}} \label{eq0120_7}
\end{eqnarray}
 for any $(s,y)\in (-\infty,T)\times\bR^d_+$ and $r>0$, where $N=N(d,d_1,\delta)$.

Now, fix any $(t,x) \in (-\infty,T) \times \bR^d$ for a moment. For each $\ell\in \bZ$ we consider the cube $Q^\ell \in \mathcal{P}_\ell$ containing $(t,x)$ and
find $Q_r^+(s,y)$, $(s,y) \in (-\infty,T) \times \bR^d_+$ with the smallest $r>0$ such that
$Q^\ell \subset Q_r^+(s,y)$ and
$$
\left( |Du - (Du)_{Q^\ell}|^2 \right)^{1/2}_{Q^\ell} \leq N\left(|Du - (Du)_{Q_r^+(s,y)}|^2 \right)^{1/2}_{Q_r^+(s,y)},
$$ 
where $N$ depends only on the ratio of the measures $\frac{|Q_r^+(s,y)|}{|Q^\ell|}$ and hence $N=N(d)$.

From this,  (\ref{eq0120_7}),  Jensen's inequality, and the definitions of sharp functions and maximal functions in Remark \ref{re0126_1},  we obtain
\begin{eqnarray}
(Du)^{\#}_{\operatorname{dy}}(t,x)&\le& N \kappa^{-1/2}\left(\sqrt{\lambda}\cM^{1/2}(|u|^2)(t,x)+\cM^{1/2}(|Du|^2))(t,x)\right)\nonumber\\
&&+N\kappa^{(d+2)/2}\cM^{1/2}(|Mf|^2))(t,x).\label{eq20210621_1}
\end{eqnarray}

The estimate \eqref{eq20210621_1} holds for any fixed $(t,x)\in(-\infty,T)\times\bR^d_+$.  Hence, we have
\begin{eqnarray*}
\|(Du)^{\#}_{\operatorname{dy}}\|^p_{p,d}
&\leq& N \kappa^{-p/2}\left((\sqrt{\lambda})^p\| \cM( |u|^2)\|^{p/2}_{p,d}+\|\cM(|Du|^2)\|^{p/2}_{p/2,d}\right)\\
            &&+N\kappa^{p(d+2)/2}\|\cM(|Mf|^2)\|^{p/2}_{p/2,d},
\end{eqnarray*}
where $N=N(d,d_1,\delta,p)$.  Noting $p/2>1$ in this step and applying WFS  and WHL in Remark $\ref{re0126_1}$ with $w \equiv 1$, which  we usually call FS theorem and HL theorem,
we have
\begin{eqnarray*}
\|Du\|^p_{p,d}
&\leq& N \kappa^{-p/2}\left((\sqrt{\lambda})^p\| |u|^2\|^{p/2}_{p/2,d}+\||Du|^2\|^{p/2}_{p/2,d}\right)+N\kappa^{p(d+2)/2}\||Mf|^2\|^{p/2}_{p/2,d}\\
&=& N \kappa^{-p/2}\left((\sqrt{\lambda})^p\| u\|^{p}_{p,d}+\|Du\|^{p}_{p,d}\right)+N\kappa^{p(d+2)/2}\|Mf\|^{p}_{p,d},
\end{eqnarray*}
and therefore
\begin{eqnarray*}
\|Du\|_{p,d}
&\le & N \kappa^{-1/2}\left(\|\sqrt{\lambda} u\|_{p,d}+\|Du\|_{p,d}\right)+N\kappa^{(d+2)/2}\|Mf\|_{p,d}\\
&\le& N \kappa^{-1/2}\left(\lambda\| Mu\|_{p,d}+\| M^{-1}u\|_{p,d}+\|Du\|_{p,d}\right)+N\kappa^{(d+2)/2}\|Mf\|_{p,d},
\end{eqnarray*}
where we used $\sqrt{\lambda}\le \lambda x_1+1/x_1$, $x_1>0$ for the second inequality.  
Then Lemma \ref{lem half} with $\theta =d$ and Hardy's inequality give
\begin{eqnarray*}
&&\lambda \|Mu\|_{p,d}+ \sqrt{\lambda} \|MDu\|_{p,d}+ \|u\|_{\fH^2_{p,d}} \nonumber\\
&\leq&
N\kappa^{-1/2}\left(\lambda\| Mu\|_{p,d}+\| M^{-1}u\|_{p,d}+\|Du\|_{p,d}\right)+N\kappa^{(d+2)/2}\|Mf\|_{p,d}+N\|Mf\|_{p,d},
\end{eqnarray*}
and an  appropriate choice of $\kappa\ge 32$ leads us to (\ref{eq_MO_4}).

2. Let $1<p<2$.  We use a duality argument with step 1.
Again it suffices to prove the a priori estimate \eqref{eq_MO_4}.
Furthermore, thanks to Lemma \ref{lem half}, we only need to prove that
\begin{equation}
							\label{eq0305_01}
\|M^{-1}u\|_p \leq 
N \|M f\|_p,
\end{equation}
where $\|\cdot\|_p = \|\cdot\|_{\bL_p((-\infty,T) \times \bR^d_+)}$. To prove this, we use the fact (see e.g.  \cite[Theorem 2.3]{MR1837532}) that $\bL_{p,d-p}((-\infty,T) \times \bR^d_+)$ is the dual space of $\bL_{q,d+p}((-\infty,T) \times \bR^d_+)$, where $1/p+1/q=1$ with $q>2$ now. 

Let $g \in \bL_{q,d+p}((-\infty,T) \times \bR^d_+)$, that is, $Mg \in \bL_{q,d}((-\infty,T) \times \bR^d_+)$.
Then, using the above result  applied with $A^{ij}(-t)$ and $q>2$,   we find that  there exists unique $v \in \fH_{q,d}^2(\bR \times \bR^d_+)$ satisfying
$$
v_t + A^{ij}(t) D_{ij} v - \lambda v = g I_{t \in (-\infty,T)}
$$
in $\bR \times \bR^d_+$. In particular, $v(t,x) = 0$ for $t \ge T$ in case $T < \infty$. This is because both $0$ and $\bar{u}(t,x):=v(-t,x)$ satisfy the system
$$
w_t=A^{ij}(-t)D_{ij}w-\lambda w
$$
on $(-\infty,-T)\times \bR^d_+$.
%
Thus  we have
\begin{eqnarray*}
\int^T_{-\infty}\int_{\bR^d_+} u^{\text{tr}}\; g\; \, dx \, dt 
&=& \int^T_{-\infty}\int_{\bR^d_+} u^{\text{tr}}\; \left(v_t+A^{ij}(t) D_{ij} v - \lambda v \right) \, dx \, dt\\
&=&\int^T_{-\infty}\int_{\bR^d_+} \left( - u_t + A^{ij}(t) D_{ij} u - \lambda u \right) ^{\text{tr}}v \, dx \, dt\\
&=&\int^T_{-\infty}\int_{\bR^d_+}f^{\text{tr}}\;  v \;\, dx \,dt\\
&=&\int^T_{-\infty}\int_{\bR^d_+}(x_1f)^{\text{tr}}\;  (x^{-1}v) \;\, dx \,dt\\
&\leq& \|M f\|_p \|M^{-1}v\|_q \leq N \|M f\|_p\|M g\|_q,
\end{eqnarray*}
where the last inequality holds by step 1. This shows that $\|M^{-1}u\|_p$ can not exceed $N\|M f\|_p$  i.e. \eqref{eq0305_01}.

3. Finally, Theorem \ref{thm1011} takes care of the case $p=2$.
\end{proof}

Based on Theorem \ref{thm_MO_1}, we build the following lemma, which  is an $L_p$- counterpart of Lemma \ref{lem0106_1}.
\begin{lemma}[Mean oscillation of $Du$ on a  half space]
							\label{lem0126_5}
Let $p>1$, $\lambda \ge 0$, $r > 0$, $\kappa \ge 32$, and $y_1 \ge 0$. Assume that $Mf \in \bL_p\left(Q_{\kappa r}^+(y_1)\right)$ and 
let $u \in \fH_{p,d}^2 \left(Q_{\kappa r}^+(y_1)\right)$ be a solution to the system
$$
-u_t + A^{ij}(t) D_{ij} u - \lambda u = f
$$
in $Q_{\kappa r}^+(y_1)$.
Then we have
\begin{multline*}
							\label{eq0126_7}
\left( \left| Du - \left(Du\right)_{Q_r^+(y_1)}\right|^p\right)^{1/p}_{Q_r^+(y_1)} \leq N \kappa^{(d+2)/p} \left( |Mf|^p \right)^{1/p}_{Q_{\kappa r}^+(y_1)}
\\
+ N \kappa^{-1/2} \left( \sqrt{\lambda} \left(|u|^p\right)^{1/p}_{Q_{\kappa r}^+(y_1)} + \left(|Du|^p\right)^{1/p}_{Q_{\kappa r}^+(y_1)}\right), 
\end{multline*}
where  $N = N(d,d_1,\delta, p) > 0$.
\end{lemma}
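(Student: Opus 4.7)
The strategy is to replay the argument of Lemma \ref{lem0106_1} almost verbatim, with the $L_p$ solvability from Theorem \ref{thm_MO_1} replacing the $L_2$ solvability from Theorem \ref{thm1011}. First, by the parabolic dilation $v(t,x) = u(\beta^2 t,\beta x)$, $g(t,x) = \beta^2 f(\beta^2 t,\beta x)$ with $\beta = \kappa r/8$, and the covariance of every term in \eqref{eq0126_7} under this change of variables, it suffices to prove the inequality in the normalized case $\kappa r = 8$, so $r = 8/\kappa \le 1/4$.

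For the boundary regime $y_1 \in [0,1]$, I would invoke Theorem \ref{thm_MO_1} to produce $w \in \fH_{p,d}^2((-\infty,0)\times \bR^d_+)$ solving
$$
-w_t + A^{ij}(t) D_{ij} w - \lambda w = f\, I_{Q_4^+}
$$
on $(-\infty,0)\times \bR^d_+$, together with the a priori bound $\|Dw\|_{\bL_{p,d}} \le N\|Mf\|_{L_p(Q_4^+)}$. Then $v := u-w \in \fH_{p,d}^2(Q_4^+)$ solves the homogeneous system in $Q_4^+$. Choosing $q$ with $1 - (d+2)/q = 1/2$, Lemma \ref{lem boundary} (after a trivial rescaling/translation) followed by Hardy's inequality yields
$$
[Dv]_{C^{1/4,1/2}(Q_2^+)} \le N\|v\|_{L_p(Q_4^+)} \le N\|M^{-1}v\|_{L_p(Q_4^+)} \le N\|D_1 v\|_{L_p(Q_4^+)}.
$$
Bounding $(|Dv - (Dv)_{Q_r^+(y_1)}|^p)^{1/p}_{Q_r^+(y_1)}$ by $N r^{1/2}[Dv]_{C^{1/4,1/2}(Q_2^+)}$ and the $w$-contribution by $N r^{-(d+2)/p}(|Mf|^p)^{1/p}_{Q_4^+}$, and using $Q_4^+ \subset Q^+_{\kappa r}(y_1) = Q^+_8(y_1)$, we obtain \eqref{eq0126_7}.

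For the interior regime $y_1 > 1$, I would solve on $Q_{1/2}(y_1) \subset Q^+_{\kappa r}(y_1)$ instead: Theorem \ref{thm_MO_1} combined with $\sqrt{\lambda} \le \lambda x_1 + x_1^{-1}$ gives $\|\sqrt{\lambda}|w|+|Dw|\|_{\bL_{p,d}} \le N \|Mf\|_{L_p(Q_{1/2}(y_1))}$, and Lemma \ref{lemma_MO_1} (in place of Lemma \ref{lem boundary}) controls $[Dv]_{C^{1/4,1/2}(Q_{1/4}(y_1))}$ by $\|\sqrt{\lambda}|v|+|Dv|\|_{L_p(Q_{1/2}(y_1))}$. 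A triangle inequality $Du = Dv+Dw$ then assembles the estimate exactly as in step 3 of the proof of Lemma \ref{lem0106_1}.

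The main substantive ingredient---rather than a genuine obstacle---is simply the availability of Theorem \ref{thm_MO_1}; once this is in hand, the proof is a mechanical $p$-adapted copy of the $L_2$ case, since Lemmas \ref{lemma_MO_1} and \ref{lem boundary} are already formulated for the full range $1 < p \le q < \infty$. The only minor point to check is that $f\, I_{Q_4^+}$ (respectively $f\, I_{Q_{1/2}(y_1)}$) has $M(\cdot) \in \bL_{p,d}$, which is immediate from $Mf \in L_p(Q^+_{\kappa r}(y_1))$ since these cutoff regions are bounded with bounded $x_1$.
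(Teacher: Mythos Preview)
Your proposal is correct and matches the paper's own proof, which simply states that the argument of Lemma \ref{lem0106_1} is repeated word for word with Theorem \ref{thm_MO_1} (the $L_p$ estimate) substituted for Theorem \ref{thm1011} (the $L_2$ estimate). You have supplied exactly this substitution in detail, including the scaling reduction to $\kappa r=8$, the boundary/interior split at $y_1=1$, and the use of Lemmas \ref{lemma_MO_1} and \ref{lem boundary} already stated for general $p$; the only cosmetic omission is the initial reduction to $\lambda>0$ via $-u_t+A^{ij}D_{ij}u-\varepsilon u=f-\varepsilon u$, which is harmless.
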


\begin{proof}
The proof repeats the proof of Lemma \ref{lem0106_1} word for word. The only difference is that we use  Theorem \ref{thm_MO_1} ($L_p$-estimate)  in place of Theorem \ref{thm1011} ($L_2$-estimate).
\end{proof}

\section{Proof of Theorem \ref{thm_main_1}}

We first recall  Assumption \textbf{A}($\rho,\varepsilon$), which is assumed in Theorem \ref{thm_main_1}.

\begin{lemma}
                                \label{lemma_main_1}
Let $T \in (-\infty, \infty]$, $\lambda\ge 0$, $p\in(1,\infty)$, $\theta\in(d-1,d-1+p)$,  $\rho\in (1/2,1)$, and   Assumption \textbf{A}$(\rho,\varepsilon)$ hold. Then there exists a positive constant $\varepsilon_0=\varepsilon_0(d,d_1,\delta,p,\theta)$ such that if  $\varepsilon\in (0,\varepsilon_0]$  and $u \in \fH_{p,\theta}^2((-\infty,T)\times \bR^d_+)$  satisfy
\begin{equation*}			
- u_t + A^{ij}(t,x)D_{ij}u -\lambda u = f
\end{equation*}
in $(-\infty,T) \times \bR^d_+$, where $Mf \in \bL_{p,\theta}((-\infty,T)\times \bR^d_+)$,  then 
\begin{equation*}				
\lambda \| M u \|_{p,\theta}
+ \|u \|_{\fH^2_{p,\theta}}
\le N \|Mf\|_{p,\theta}+N\| D u \|_{p,\theta},
\end{equation*}
 where $\|\cdot\|_{p,\theta}=\|\cdot\|_{\bL_{p,\theta}((-\infty,T)\times \bR^d)},  \|\cdot\|_{\fH^2_{p,\theta}}= \|\cdot\|_{\fH^2_{p,\theta}((-\infty,T)\times \bR^d)}$, and $N = N(d, d_1,\delta, p,\theta)$.
\end{lemma}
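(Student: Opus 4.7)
The plan is to combine a perturbation-via-freezing argument with the sharp-function $A_p$-weight machinery from the previous section, and then use a Whitney partition of unity to recover the second-order part of the $\fH^2_{p,\theta}$ norm. Given $\kappa\ge 32$, $(s,y)$, and $r\le\rho y_1/\kappa$, let $\bar A^{ij}(t)$ denote the spatial average of $A^{ij}(t,\cdot)$ on $B_{\kappa r}(y)$ and write the equation as
\begin{equation*}
-u_t+\bar A^{ij}(t)D_{ij}u-\lambda u=f+\bigl(\bar A^{ij}(t)-A^{ij}(t,x)\bigr)D_{ij}u.
\end{equation*}
I would apply Lemma \ref{lem0126_5} at an exponent $q\in(1,p)$ chosen so that $\theta\in(d-1,d-1+p/q)$, which by Remark \ref{re0126_1} makes $w(x)=x_1^{\theta-d}$ an $A_{p/q}$ weight on $(-\infty,T)\times\bR^d_+$.

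The BMO perturbation is handled by H\"older's inequality at the level of averages over $Q_{\kappa r}^+(y_1)$: writing $1/q=1/\alpha+1/\beta$ with $q<\beta<p$, the factor $(|A^{ij}-\bar A^{ij}|^{\alpha})^{1/\alpha}$ is bounded by $N(\delta)\,\varepsilon^{\eta}$ for some $\eta=\eta(p,\theta)>0$ (interpolating the small-BMO assumption against the uniform bound $|A^{ij}|\le\delta^{-1}$), while the residual $(|MD^2u|^\beta)^{1/\beta}$ is controlled by a maximal function at the admissible exponent $\beta<p$. Converting mean oscillations of $Du$ pointwise to the dyadic sharp function, the conclusion of Lemma \ref{lem0126_5} yields
\begin{equation*}
(Du)^{\#}_{\operatorname{dy}}\le N\kappa^{-1/2}\bigl(\sqrt\lambda\,\cM^{1/q}(|u|^q)+\cM^{1/q}(|Du|^q)\bigr)+N\kappa^{(d+2)/q}\bigl(\cM^{1/q}(|Mf|^q)+\varepsilon^\eta\cM^{1/\beta}(|MD^2u|^\beta)\bigr).
\end{equation*}
Applying the Fefferman-Stein theorem and the Hardy-Littlewood maximal theorem with the $A_{p/q}$ (resp.\ $A_{p/\beta}$) weight $w$, together with the trivial $\sqrt\lambda\le\lambda x_1+x_1^{-1}$ and Hardy's inequality $\|M^{-1}u\|_{p,\theta}\le N\|Du\|_{p,\theta}$ (valid for $\theta\in(d-1,d-1+p)$), I obtain
\begin{equation*}
\|Du\|_{p,\theta}\le N\kappa^{-1/2}\bigl(\lambda\|Mu\|_{p,\theta}+\|Du\|_{p,\theta}\bigr)+N\kappa^{(d+2)/q}\|Mf\|_{p,\theta}+N\kappa^{(d+2)/q}\varepsilon^\eta\|MD^2u\|_{p,\theta}.
\end{equation*}

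The remaining pieces $\|MD^2u\|_{p,\theta}$, $\|Mu_t\|_{p,\theta}$, $\|M^{-1}u\|_{p,\theta}$, and $\lambda\|Mu\|_{p,\theta}$ of the $\fH^2_{p,\theta}$-norm are recovered from $\|Du\|_{p,\theta}$ by a Whitney-type partition of unity: cover $\bR^d_+$ by parabolic cylinders $Q_n\subset Q_{x_{n,1}/8}(t_n,x_n)$ on which $M\simeq x_{n,1}$. On each such cylinder the cut-off $\eta_n u$ satisfies a system on the whole space whose coefficients are merely measurable in $t$ and have small BMO in $x$, so the unweighted interior $L_p$-estimate of \cite{MR2771670} applies after rescaling by $x_{n,1}$; summing the $p$-th powers with the weight $x_1^{\theta-d}$ produces
\begin{equation*}
\|MD^2u\|_{p,\theta}+\|Mu_t\|_{p,\theta}\le N\bigl(\|Mf\|_{p,\theta}+\|Du\|_{p,\theta}+\|M^{-1}u\|_{p,\theta}\bigr),
\end{equation*}
and $\lambda\|Mu\|_{p,\theta}\le\|Mu_t\|_{p,\theta}+N\|MD^2u\|_{p,\theta}+\|Mf\|_{p,\theta}$ follows immediately from the equation. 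Combining everything, choosing $\kappa$ large enough to absorb $\lambda\|Mu\|$ and a portion of $\|Du\|$ on the right, and then $\varepsilon_0$ small enough that $\kappa^{(d+2)/q}\varepsilon_0^\eta\ll 1$, the $\|MD^2u\|_{p,\theta}$ term is absorbed and the claimed inequality follows.

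The main obstacle is the tension between two exponent constraints in the perturbation step: the weighted Fefferman-Stein and Hardy-Littlewood theorems (for $w=x_1^{\theta-d}$ and the full range $\theta\in(d-1,d-1+p)$) force us to form sharp and maximal functions at an exponent $q<p$, whereas H\"older against the small BMO of $A^{ij}$ places the $D^2u$-factor at an exponent $\beta>q$; these are simultaneously compatible with $A_{p/\beta}\ni w$ only when $\beta<p$, and the argument would genuinely break if we tried to form the sharp function from $D^2u$ rather than $Du$ because the corresponding weight $x_1^{\theta-d+p}$ need not lie in $A_p$ for the prescribed range of $\theta$. This is precisely the structural reason why the paper organizes the sharp-function step around first derivatives only and then produces the second derivatives by the partition of unity above.
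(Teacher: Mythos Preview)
Your Whitney partition-of-unity step (what you describe as ``step 3'') is, by itself, the complete proof of this lemma, and it is exactly the paper's approach: localize so that on each rescaled piece the coefficients are measurable in $t$ with uniformly small BMO in $x$, apply the unweighted $L_p$-estimate from \cite{MR2771670}, sum with the weight, and use Hardy's inequality $\|M^{-1}u\|_{p,\theta}\le N\|Du\|_{p,\theta}$ together with the equation itself to pick up $\lambda\|Mu\|_{p,\theta}$. That already gives
\[
\lambda\|Mu\|_{p,\theta}+\|u\|_{\fH^2_{p,\theta}}\le N\|Mf\|_{p,\theta}+N\|Du\|_{p,\theta}.
\]
Note that $\|Du\|_{p,\theta}$ sits on the \emph{right-hand side} of the stated inequality, so there is nothing to absorb and no need for $\kappa$ or for a smallness choice beyond what \cite{MR2771670} requires.

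The freezing/sharp-function argument you place first is therefore superfluous here; in the paper that machinery belongs to the \emph{next} step (Lemma~\ref{lem0201_1} and Proposition~\ref{proposition_main_1}), where one must actually estimate $\|Du\|_{p,\theta}$. Moreover, as you wrote it, that part has a genuine gap: you restrict to $\kappa r\le \rho y_1$, but $(Du)^{\#}_{\operatorname{dy}}$ requires control at \emph{all} scales, in particular for dyadic cubes touching $\{x_1=0\}$, which cannot be embedded in any $Q_{\kappa r}(s,y)$ with $\kappa r\le\rho y_1$. The paper closes this gap in Lemma~\ref{lem0201_1} only by assuming in addition that $u$ is compactly supported in some $Q_R(h)$ with $R<\rho h$ (Case~2 there). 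Without that hypothesis the passage from your mean-oscillation bound to a pointwise sharp-function bound is not justified. In short: keep your step~3, drop the rest, and you have the paper's proof of this lemma.
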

\begin{proof}
To prove this lemma we follow the proof of \cite[Lemma 5.1]{MR3318165}, the result of single equations, almost word for word.   Doing so, one notices that the regularity condition on $A^{ij}$s in this paper is a bit different from that in \cite{MR3318165},  however, we see that the mean oscillations with respect to the spatial variables on $B_R(x)$, $R \in (0,1/2]$, of the coefficients
$$
A_r^{ij}(\cdot,\cdot):= A^{ij}(\cdot/r^2,\cdot/r)
$$
can be made sufficiently small under  our Assumption \textbf{A}($\rho$, $\varepsilon$) when $x_1 \in (1,4)$ and we are safe to proceed.

Carrying out the proof, we make sure to apply the result of systems from \cite{MR2771670} at  the very step which corresponds to the one at which the result of single equations is applied in the proof of \cite[Lemma 5.1]{MR3318165}.\end{proof}
Comparing Lemma \ref{lemma_main_1} and our main result, Theorem \ref{thm_main_1},  the domination of $\| D u \|_{p,\theta}$ by  $\| Mf \|_{p,\theta}$ is crucial. In fact, we have carefully worked out for this throughout the paper. Yet, we need one more step, Proposition \ref{proposition_main_1}, in which we import the Muckenhoupt weight we mentioned and prepared in Remark \ref{re0126_1}. 

To deliver the proof of Proposition  \ref{proposition_main_1} effectively, we elaborate the following lemma in advance.
\begin{lemma}
     \label{lem0201_1}
Let $q\in (1,\infty)$,  $\theta\in\bR$, $\beta\in(1,\infty)$, and $\beta'=\frac{\beta}{\beta-1}$, the H\"{o}lder conjugate of $\beta$.  Let  $h > 0$, $\rho \in (1/2,1)$, $R \in (0,\rho h)$, $\kappa \ge 32$  and let
$$
u \in \fH_{\beta q,\theta}^2 (\bR \times \bR^d_+)
$$
be compactly supported on $Q_R(h)=Q_R(0,(h,{\bf{0}}))=Q^+_R(0,(h,{\bf{0}}))$.
Then under Assumption \textbf{A}$(\rho,\varepsilon)$ with any given $\varepsilon>0$,
for any  $ (s,y) \in\bR \times  \overline{\bR^d_+}$ and $r > 0$ we have the estimate
\begin{eqnarray}
&&\left( \left| Du - \left(Du\right)_{Q_r^+(s,y)}\right|^q\right)^{1/q}_{Q_r^+(s,y)}\nonumber\\
 &\le& N_0 \kappa^{-1/2} \left( \sqrt{\lambda} \left(|u|^q\right)^{1/q}_{Q_{\kappa r}^+(s,y)} + \left(|Du|^q\right)^{1/q}_{Q_{\kappa r}^+(s,y)}\right)\nonumber\\
&&+ N_1 \kappa^{(d + 2)/q}  \varepsilon^{1/{(\beta' q)}} \left( |MD^2 u|^{\beta q} \right)^{1/(\beta q)}_{Q_{\kappa r}^+(s,y)}
+ N_0  \kappa^{(d+ 2)/q}  \left( |M f|^q \right)^{1/q}_{Q_{\kappa r}^+(s,y)},\nonumber\\
\label{eq20210622_1}
\end{eqnarray}
where  $N_0=N_0(d,d_1,\delta,q)$, $N_1=N_1(d,d_1,\delta,q,\beta,\rho)$, and
$$
f: = -u_t + A^{ij}(t,x)D_{ij}u - \lambda u
$$
in  $Q_{\kappa r}^+(s,y)$.
\end{lemma}

\begin{proof}
1. Since $u$ is supported on 
$$
Q_R(h) = (-R^2,0) \times (h-R,h+R) \times B_R'({\bf{0}})
$$
with $h - R > 0$, $u$ is supported on a compact set strictly away from the boundary of $\bR^d_+$ and hence, for any $(s,y) \in \bR \times \overline{\bR^d_+}$ and $r>0$, we have
\begin{equation}
    \label{eqn 5.18.1}
u \in \fH_{\beta q, d}^2(Q_{\kappa r}^+(s,y)) \cap \fH_{q, d}^2(Q_{\kappa r}^+(s,y)).
\end{equation}
On the other hand, by scaling argument we only need to show \eqref{eq20210622_1}  for the case $h = 1$.

2. Obviously, we may assume that $Q_r^+(s,y) \cap Q_R(1) \neq \emptyset$, which in particular means that the interval $((y_1-r)\vee 0,y_1+r)$ intersects with the interval $(1-R,1+R)$ and hence 
\begin{equation}
							\label{eq0309_03}
(1-R-r) < y_1 < 1+R+r,\quad y_1\ge 0.
\end{equation}
Note that $1-R-r$ can be negative with large $r>0$. 

We work with $Q^+_{\kappa r}(s,y)$ and $\textbf{A}(\rho,\varepsilon)$. To do so, 
we first observe the following two cases, depending on the size of $\kappa r$.

Case 1: $\kappa r \leq \rho(1-R-r)$.  In this case   we are forced to have $0<1-R-r$ and $r$ can not be arbitrarily large.   Along with it, \eqref{eq0309_03}  gives
$$
y_1 > \kappa r/\rho>\kappa r.
$$

Case 2: $\kappa r > \rho (1-R-r)$. In this case $r$ and hence $\kappa r$ can not be arbitrarily small. Indeed, this along with $\rho< 1 < \kappa$ shows that
\begin{equation}
							\label{eq0309_01}
\kappa r > (\rho+\kappa)r/2 > \rho r/2 + (1-R-r)\rho/2 = (1-R)\rho/2 > R(1-\rho)/2,
\end{equation}
where the last inequality follows $R < \rho$. If $\kappa r$ is large, $Q_{\kappa r}(s,y)$ may not be contained in $\bR\times\bR^d_+$.

Below in step 3, we will use Assumption \textbf{A}$(\rho,\varepsilon)$  for $A^{ij}$s and
in this condition the mean oscillation of $A^{ij}$ works only with  the parabolic cylinders $Q$ \emph{contained} in $\bR\times\bR^d_+$. Connected to this concern, 
we set $Q:=Q_{\kappa r}(s,y)=Q^+_{\kappa r}(s,y)$ in Case 1 and set $Q:= Q_R(1)$ in Case 2, noting that the support of $u$ is in $Q_R(1)$ and in Case 2
\begin{equation}
							\label{eq0309_02}
|Q_R(1)| = N(d) R^{d+2} \leq N(d,\rho) (\kappa r)^{d+2} \leq N |Q_{\kappa r}^+(s,y)|
\end{equation}
holds by \eqref{eq0309_01}.

3. Now, we  set
$$
\bar{A}^{ij}(t) = \dashint_B A^{ij}(t,z) \, dz,\quad i, j=1,\ldots,d,
$$
where $B$ is either $B_{\kappa r}(y)=(y_1-r,y_1+r)\times B'_{\kappa r}(y')$ or $B_R(1,{\bf{0}})=(1-r,1+r)\times B' _R({\bf{0}})$ depending on Case 1 or Case 2 in step 2, respectively.  We note that $\bar{A}^{ij}(t)$s depend only on $t$ and satisfy the Legendre-Hadamard condition \eqref{eq_intro_1} and the inequality $(|\bar{A}^{ij}-A^{ij}|)_Q\le \varepsilon$ holds  for all $i,j$ in both cases of $Q$ in step 2 by Assumption \textbf{A}$(\rho,\varepsilon)$ and the related definitions therein.

We then have the system
$$
-u_t + \bar{A}^{ij}(t) D_{ij} u - \lambda u = F
$$
on any chosen $Q_{\kappa r}^+(s,y)$,
where the $d_1\times 1$ matrix valued function $F$ is defined by
$$
F (t,x)= \left(\bar{A}^{ij}(t) - A^{ij}(t,x)\right) D_{ij}u(t,x) + f(t,x).
$$
By \eqref{eqn 5.18.1}, we have
$u \in \fH_{q,d}^2(Q_{\kappa r}^+(s,y))$ and $M F \in L_q(Q_{\kappa r}^+(s,y))$.
Then by Lemma \ref{lem0126_5} with  $q$  in place of $p$  along with a translation argument,  we have
\begin{eqnarray}
\nonumber
\left( \left| Du - \left(Du\right)_{Q_r^+(s,y)}\right|^q\right)^{1/q}_{Q_r^+(s,y)} &\le& N \kappa^{-1/2} \left( \sqrt{\lambda} \left(|u|^q\right)^{1/q}_{Q_{\kappa r}^+(s,y)} + \left(|Du|^q\right)^{1/q}_{Q_{\kappa r}^+(s,y)}\right)\\
&&+ N \kappa^{(d+ 2)/q} \left( |MF|^q \right)^{1/q}_{Q_{\kappa r}^+(s,y)}, \label{eqn 5.18.2}
\end{eqnarray}
where $N=N(d,d_1,\delta,q)$.
Meanwhile, by the definition of $F$, triangle inequality, H\"{o}lder inequality with \eqref{eqn 5.18.1}, the boundedness condition \eqref{eq_intro_3} with the fact $\beta'q>1$, and the observation \eqref{eq0309_02}, we have 
\begin{eqnarray*}
&&\left( |M F|^q \right)^{1/q}_{Q_{\kappa r}^+(s,y)}\nonumber\\
&\le& \sum_{i,j}\left(|\bar{A}^{ij}-A^{ij}|^{\beta'q}I_{Q}\right)^{1/(\beta'q)}_{Q^+_{\kappa r}(s,y)} \left(|MD^2u|^{\beta q}\right)^{1/(\beta q)}_{Q^+_{\kappa r}(s,y)}+\left( |Mf|^q \right)^{1/q}_{Q_{\kappa r}^+(s,y)}\nonumber\\
&\le& N'\sum_{i,j}\left(|\bar{A}^{ij}-A^{ij}|\right)^{1/(\beta'q)}_{Q} \left(|MD^2u|^{\beta q}\right)^{1/(\beta q)}_{Q^+_{\kappa r}(s,y)}+\left( |Mf|^q \right)^{1/q}_{Q_{\kappa r}^+(s,y)},\label{eq_main_6}
\end{eqnarray*}
where $N'$ depends only on $d_1,\delta,q,\beta$.  Then, by Assumption \textbf{A}$(\rho,\varepsilon)$  we obtain
\begin{eqnarray*}
\left( |M F|^q \right)^{1/q}_{Q_{\kappa r}^+(s,y)}\leq N^{''}\varepsilon^{1/{(\beta' q)}} \left( |MD^2 u|^{\beta q} \right)^{1/(\beta q)}_{Q_{\kappa r}^+(s,y)} +  \left( |Mf|^q \right)^{1/q}_{Q_{\kappa r}^+(s,y)},
\end{eqnarray*}
where $N^{''}= N_0(d,d_1,\delta,q,\beta,\rho)$.  This with \eqref{eqn 5.18.2} proves the lemma.  
\end{proof}

\begin{proposition}
                                \label{proposition_main_1}
Let $T \in (-\infty, \infty]$, $\lambda\ge 0$, $p\in(1,\infty)$, and $\theta\in(d-1,d-1+p)$. Also, let $h>0$, $\rho\in (1/2,1)$, $\varepsilon \in (0,\varepsilon_0]$, where $\varepsilon_0$ is taken from Lemma \ref{lemma_main_1}, and $R\in (0,\rho h)$. 

Let $u\in \fH^2_{p,\theta}((-\infty,T)\times\bR^d_+)$ be compactly supported on $Q_R(h)$ and let us set $f:=-u_t+A^{ij}(t,x)D_{ij}u-\lambda u$.
Then under Assumption \textbf{A}$(\rho,\varepsilon)$, we have
\begin{equation}
							\label{eq_main_3}
\lambda \| M u \|_{p,\theta}+\sqrt{\lambda} \| M Du \|_{p,\theta}
+ \|u \|_{\fH^2_{p,\theta}}
\le N_0 \|Mf\|_{p,\theta}  + N_1\varepsilon^{1/(\beta'q)} \|MD^2u\|_{p,\theta},
\end{equation}
where $\|\cdot\|_{p,\theta}=\|\cdot\|_{\bL_{p,\theta}((-\infty,T)\times \bR^d)}$, $\|\cdot\|_{\fH^2_{p,\theta}}= \|\cdot\|_{\fH^2_{p,\theta}((-\infty,T)\times \bR^d)}$, $N_0 = N_0(d,d_1,\delta,p,\theta)$, $N_1 = N_1(d, d_1,\delta, p,\theta,\rho)$,
and constants $\beta',q$ are positive numbers determined by $p$ and $\theta$.
\end{proposition}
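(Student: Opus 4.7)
The plan is to replicate the sharp-function/$A_p$-weights scheme used in the proof of Theorem \ref{thm_MO_1} for the case $p>2$, but now adapted to the full weighted norm $\|\cdot\|_{p,\theta}$ and combined with the first-derivative a priori estimate of Lemma \ref{lemma_main_1}. The two key ingredients are: (i) Lemma \ref{lem0201_1}, which provides a mean-oscillation estimate for $Du$ at a flexible pair of scales $(q,\beta q)$, and (ii) the weighted Hardy--Littlewood and Fefferman--Stein theorems (WHL and WFS in Remark \ref{re0126_1}) applied to the weight $w(x) = x_1^{\theta-d}$.

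Since $\theta - d + 1 \in (0, p)$, one has $p/(\theta - d + 1) > 1$, so we may fix $q > 1$ and $\beta > 1$ (depending only on $p$ and $\theta$) with $\beta q < p/(\theta - d + 1)$. By Remark \ref{re0126_1} the weight $w = x_1^{\theta - d}$ then lies in $A_{p/(\beta q)} \subset A_{p/q}$, so both WHL and WFS are available in $L_{p, w} = L_{p, \theta}$ at scales $q$ and $\beta q$. Applying Lemma \ref{lem0201_1} to each dyadic cube $Q^\ell$ through $(t,x)$ (by enclosing it in a parabolic cylinder $Q^+_r(s,y)$ of comparable radius, exactly as in the proof of Theorem \ref{thm_MO_1}) and taking the supremum yields the pointwise bound
\begin{align*}
(Du)^{\#}_{\operatorname{dy}}(t,x) &\le N \kappa^{-1/2}\bigl(\sqrt{\lambda}\,\cM^{1/q}(|u|^q) + \cM^{1/q}(|Du|^q)\bigr)(t,x) \\
 &\quad + N \kappa^{(d+2)/q} \varepsilon^{1/(\beta' q)} \cM^{1/(\beta q)}(|MD^2u|^{\beta q})(t,x) + N \kappa^{(d+2)/q} \cM^{1/q}(|Mf|^q)(t,x).
\end{align*}
Taking $L_{p, w}$-norms, applying WFS on the left and WHL to each maximal function on the right, and using $\sqrt{\lambda} \le \lambda x_1 + x_1^{-1}$ to convert $\sqrt{\lambda}\|u\|_{p, \theta}$ into $\lambda\|Mu\|_{p, \theta} + \|M^{-1}u\|_{p, \theta}$, we arrive at
$$
\|Du\|_{p,\theta} \le N\kappa^{-1/2}\bigl(\lambda\|Mu\|_{p,\theta} + \|M^{-1}u\|_{p,\theta} + \|Du\|_{p,\theta}\bigr) + N\kappa^{(d+2)/q} \varepsilon^{1/(\beta' q)}\|MD^2u\|_{p,\theta} + N\kappa^{(d+2)/q}\|Mf\|_{p,\theta}.
$$

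To close, combine this bound with Lemma \ref{lemma_main_1} (which controls $\lambda\|Mu\|_{p,\theta}+\|u\|_{\fH^2_{p,\theta}}$ by $N\|Mf\|_{p,\theta}+N\|Du\|_{p,\theta}$). Since $\|M^{-1}u\|_{p,\theta}$, $\|Du\|_{p,\theta}$, and $\lambda\|Mu\|_{p,\theta}$ all reappear on the left of the combined estimate, fix $\kappa$ large enough (depending only on $d,d_1,\delta,p,\theta$) so the $N\kappa^{-1/2}$-terms are absorbed. This yields
$$
\lambda\|Mu\|_{p,\theta} + \|u\|_{\fH^2_{p,\theta}} \le N_0 \|Mf\|_{p,\theta} + N_1 \varepsilon^{1/(\beta' q)} \|MD^2u\|_{p,\theta},
$$
with $N_1$ absorbing the now-fixed factor $\kappa^{(d+2)/q}$. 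Crucially, the $\|MD^2u\|_{p,\theta}$ term is \emph{not} absorbed here — that will happen in the proof of Theorem \ref{thm_main_1} by choosing $\varepsilon$ small. The missing term $\sqrt{\lambda}\|MDu\|_{p,\theta}$ in \eqref{eq_main_3} is then recovered from the standard interpolation $\sqrt{\lambda}\|MDu\|_{p,\theta} \le N\lambda\|Mu\|_{p,\theta} + N\|MD^2u\|_{p,\theta}$ already used in Lemma \ref{lem half}. The main obstacle is the admissibility constraint $\beta q < p/(\theta-d+1)$: this single inequality — which is solvable precisely because $\theta < d-1+p$ — is what allows the $A_p$-weighted sharp-function machinery to operate across the full range of $\theta$, and is the real reason the argument succeeds where the approach of \cite{KimLee16} required restrictive conditions.
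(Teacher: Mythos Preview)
Your proof is correct and follows essentially the same route as the paper: fix exponents $q,\beta$ so that $x_1^{\theta-d}\in A_{p/(\beta q)}$, pass from Lemma~\ref{lem0201_1} to a pointwise sharp-function bound, apply WFS/WHL with the weight $x_1^{\theta-d}$, use $\sqrt{\lambda}\le \lambda x_1+x_1^{-1}$, then invoke Lemma~\ref{lemma_main_1} and choose $\kappa$ large to absorb. One small correction: your parameter constraint $\beta q < p/(\theta-d+1)$ is the right condition for $\theta\in(d-1,d-1+p/(\beta q))$, but when $\theta<d$ it does not by itself force $\beta q<p$, which is also needed for $A_{p/(\beta q)}$ and for WHL at exponent $p/(\beta q)>1$; the paper explicitly imposes $q\beta<p$ as well, and you should too.
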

\begin{proof}
For  $p\in(1,\infty)$, $\theta\in(d-1,d-1+p)$, we choose and fix $q$, $\beta \in (1,\infty)$ satisfying
$$
q\in (1,p), \quad q\beta<p, \quad  \theta\in (d-1, d-1+p/{\beta q}).
$$
Then we note $u\in \fH^2_{\beta q,\theta}((-\infty,T)\times\bR^d_+)$ since the support of $u$ is bounded.  Hence we can use Lemma \ref{lem0201_1}.  
Following  the arguments in the proof of Theorem \ref{thm_MO_1} with the help of Lemma \ref{lem0201_1},   we obtain that, for any $\kappa\ge 32$  and $(t,x) \in (-\infty,T) \times \bR^d_+$, we have the following sharp function and maximal function(s) relation
\begin{eqnarray*}
(Du)_{\operatorname{dy}}^{\#}(t,x)& \le&  N_0 \kappa^{-1/2} \left(\sqrt{\lambda} \cM^{1/q}\left( |u|^q \right)(t,x)+ \cM^{1/q}\left( |Du|^q \right)(t,x)\right)\\
&&+N_1\kappa^{(d+ 2)/q}  \varepsilon^{1/{(q\beta')}}\cM^{1/(q\beta)}\left( |MD^2u|^{q\beta} \right)(t,x)\\
&&+N_0 \kappa^{(d+ 2)/q} \cM^{1/q}\left( |Mf|^q \right)(t,x),
\end{eqnarray*}
where $\beta'=\frac{\beta}{\beta-1}$. Now, Remark \ref{re0126_1} comes into play.
As noted in the remark, $w=x_1^{\theta-d}$ is a Muckenhoupt weight and belongs to $A_{p/(\beta q)} \subset A_{p/q}$.
Hence, by Fefferman-Stein theorem (WFS) and Hardy-Littlewood theorem (WHL) for this weight $w$ we have
\begin{eqnarray*}
\|Du\|_{p,\theta} &\leq& N_0 \kappa^{-1/2} \left(\sqrt{\lambda}\|u\|_{p,\theta}+\|Du\|_{p,\theta}\right) \\
&&+N_1\kappa^{(d +\theta+ 2)/q}  \varepsilon^{1/{(q\beta')}}\|MD^2u\|_{p,\theta}+N_0
 \kappa^{(d+\theta + 2)/q}\|Mf\|_{p,\theta}.
 \end{eqnarray*}
Further, by the relation $\sqrt{\lambda} \leq \lambda x_1 + x_1^{-1}$ for $x_1 > 0$ mentioned earlier, we also have
 $$
 \sqrt{\lambda}\|u\|_{p,\theta}\leq N(p)\left( \lambda \|Mu\|_{p,\theta}+\|M^{-1}u\|_{p,\theta}\right).
 $$
Then along with these estimates, Lemma {\ref{lemma_main_1}}, an appropriate choice of  sufficiently large $\kappa\ge 32$, and the interpolation \eqref{eq20210624_1} we arrive at the estimate (\ref{eq_main_3}).
\end{proof}

\begin{remark}
The result of Proposition  \ref{proposition_main_1} is invariant under the translation of $Q_R(h)$ as long as the compact support of $u$ is contained in the translated cylinder.
\end{remark}
We are ready to wrap this paper up. 
\\  \\
{\bf{Proof of Theorem \ref{thm_main_1}}}
\begin{proof}
1. Due to the method of continuity and the corresponding theory of the Laplacian case in \cite[Theorem 3.5]{MR3318165}, it suffices to show the a priori estimate (\ref{eq_main_1}). 

 2. In this step,  we assume that $B^i(t,x)$s and $C(t,x)$  are zero matrices for all $t,x$.  

From Lemma 5.6 in \cite{MR2990037} (or Lemma 3.3 in \cite{MR2111792}) we bring in and prepare the following: for any constant $\varepsilon_2>0$  there is a constant $\rho = \rho(\varepsilon_2) \in (1/2,1)$ and non-negative (scalar) functions  $\eta_k=\eta_k(t,x) \in C_0^\infty(\bR\times\bR^{d}_+)$, $k=1,2,\ldots$ satisfying
\begin{equation}
							\label{eq_main_4}
\sum_k \eta_k^p \ge 1,
\quad
\sum_k \eta_k \le N(d),
\quad
\sum_k \left(M |D\eta_k| + M^2 |D^2 \eta_k| + M^2 |(\eta_k)_t| \right) \le \varepsilon_2^p
\end{equation}
and moreover for each $k$  there exist $r>0$ and a point $(\tau,\xi) \in \bR\times\bR^{d}_+$ such that $0<r < \rho \xi_1$ and $\text{supp} \, \eta_k \subset Q_r(\tau,\xi)=Q^+_r(\tau,\xi)$; we will use Proposition \ref{proposition_main_1} upon translations.
The constant $\varepsilon_2$ will be specified shortly. 

Meanwhile, observe that each $u_k : =\eta_k u $, a localization of $u$, satisfies
\begin{multline*}
-({u_k})_t + A^{ij}(t,x)D_{ij} u_k  - \lambda u_k
\\
= \eta_k f  +  A^{ij} (t,x)( D_j \eta_k D_i u+ D_i \eta_k D_j u) + D_{ij} \eta_k A^{ij} (t,x) u  - (\eta_k)_t u 
\end{multline*}
in $(-\infty,T)\times\bR^{d}_+$.
Then using a translation argument and Proposition \ref{proposition_main_1} with $\varepsilon \in (0,\varepsilon_0]$ there, we have
\begin{eqnarray*}
&&\lambda \|M u_k\|_{p,\theta}+\sqrt{\lambda}\|M Du_k\|_{p,\theta}
+ \|u_k\|_{\fH_{p,\theta}^2}\\
&\le& N_0 \|M\eta_k \,f \|_{p,\theta}
+ N_0 \sum_{i,j}\|M D_i\eta_k\, D_ju \|_{p,\theta} + N_0\|M ( D^2 \eta_k)\, u  \|_{p,\theta}\\
&&\quad + N_0 \|M  (\eta_k)_t u \|_{p,\theta} + N_1\varepsilon^{1/(\beta'q)} \|MD^2(\eta_k u)\|_{p,\theta},
\end{eqnarray*}
where $N_0=N_0(d,d_1,\delta,p,\theta)$, $N_1 = N_1(d,d_1,\delta,p,\theta,\rho)$, and $q, \beta'$ are positive numbers determined by $p$ and $\theta$.
From this and the properties of $\eta_k$ in \eqref{eq_main_4}, we obtain
\begin{eqnarray*}
&&\lambda \|M u\|_{p,\theta}+\sqrt{\lambda}\|MDu\|_{p,\theta}
+ \|u\|_{\fH_{p,\theta}^2}\\
&\le& N_0 \|M f\|_{p,\theta}
+ N_0 \varepsilon_2 \left(\|Du\|_{p,\theta} + \|M^{-1}u\|_{p,\theta}\right)\\
&&
+N_1\varepsilon^{1/(\beta'q)} \big(\|MD^2u\|_{p,\theta}
+\varepsilon_2 \|Du\|_{p,\theta} + \varepsilon_2\|M^{-1}u\|_{p,\theta}\big).
\end{eqnarray*}

Having had this, we now first choose $\varepsilon_2 \in (0,1)$ sufficiently small depending only on  $d$, $d_1$, $\delta$, $p$, and $\theta$ such that $N_0 \varepsilon_2 < 1/3$, then choose $\rho=\rho(\varepsilon_2)\in (1/2,1)$ such that \eqref{eq_main_4} is satisfied, and finally choose $\varepsilon = \varepsilon(d,d_1,\delta,p,\theta,\rho) \in (0,\varepsilon_0]$ so that
\begin{equation}
\label{eqn 5.11.1}
N_1 \varepsilon^{1/(\beta' q)} < 1/3.
\end{equation}
Upon these choices in order we arrive at the estimate \eqref{eq_main_1}.

3. General case of $B^i$s and $C$ under $\text{A}(\rho,\varepsilon)$ condition. Our system now is
\begin{equation*}			
- u_t + A^{ij}(t,x)D_{ij}u -\lambda u = f-B^iD_iu-Cu
\end{equation*}
in $(-\infty,T) \times \bR^d_+$.  Thus, by the result of step 2 and $\text{A}(\rho,\varepsilon)$ condition, if $\varepsilon\in (0,\varepsilon_0]$ satisfies (\ref{eqn 5.11.1}), we end up with
$$
\lambda \|M u\|_{p,\theta}+\sqrt{\lambda}\|MDu\|_{p,\theta}
+ \|u\|_{\fH_{p,\theta}^2}
\le N_2 \|M f\|_{p,\theta}+N_2 \varepsilon \|Du\|_{p,\theta}+N_2 \varepsilon \|M^{-1}u\|_{p,\theta},
$$
where $N_2=N_2(d,d_1,\delta,p, \theta)$. Thus it is enough to take $\varepsilon>0$ which is further smaller so that $N_2 \varepsilon<1/2$ and  the estimate \eqref{eq_main_1} holds. The taken $\varepsilon$ is still in $(0,\varepsilon_0]$.

4.  The a priori estimate \eqref{eq_main_1} holds now and hence  theorem is proved.
\end{proof}


\bibliographystyle{plain}

\def\cprime{$'$}

\end{document}